\documentclass[10pt]{article}
\usepackage[top=3.5cm, bottom=3cm, left=3cm, right=2.5cm]{geometry} 

\usepackage{amssymb,amsmath,amsfonts, amsthm}
\numberwithin{equation}{section}
\usepackage{mathtools}
\usepackage{thmtools} 

\usepackage{authblk}
\usepackage{hyperref}
\usepackage{cleveref}

\usepackage[shortlabels]{enumitem} 
\usepackage{graphicx}
\usepackage{subcaption}
\graphicspath{ {./figures/} }

\usepackage{mathrsfs}
\usepackage{bm}

\usepackage[backend=bibtex]{biblatex}
\theoremstyle{plain}
\newtheorem{theorem}{Theorem}[section]
\theoremstyle{plain}
\newtheorem{lemma}[theorem]{Lemma}
\newtheorem{proposition}[theorem]{Proposition}
\newtheorem{corollary}[theorem]{Corollary}
\theoremstyle{definition}
\newtheorem{definition}[theorem]{Definition}
\newtheorem{remark}[theorem]{Remark}
\newtheorem{assumption}[theorem]{Assumption}

\newcounter{ass}

\newenvironment{ass}
{%
	\refstepcounter{ass}%
	\crefalias{equation}{ass}%
	\begin{equation}
	}
	{%
	\end{equation}%
}

\newcommand{\eps}{\ensuremath{\varepsilon}}

\newcommand{\cD}{\mathcal{D}}

\newcommand{\cF}{\mathcal{F}}
\newcommand{\cG}{\mathcal{G}}

\newcommand{\cO}{\mathcal{O}}

\newcommand{\cS}{\mathcal{S}}

\newcommand{\bN}{\mathbb{N}}

\newcommand{\bP}{\mathbb{P}}

\newcommand{\bR}{\mathbb{R}}

\newcommand{\bZ}{\mathbb{Z}}

\newcommand{\mfv}{\mathfrak{v}}

\newcommand{\mfu}{\mathfrak{u}}
\newcommand{\bgg}{\bm{\gamma}}

\newcommand\dif{\mathop{}\!\mathrm{d}}

\DeclarePairedDelimiterX{\inp}[2]{\langle}{\rangle}{#1, #2}
\DeclarePairedDelimiter\abs{\lvert}{\rvert}
\DeclarePairedDelimiter\norm{\lVert}{\rVert}

\DeclareMathOperator*{\supp}{supp}

\crefname{ass}{assumption}{assumptions}
\Crefname{ass}{Assumption}{Assumptions}

\usepackage{xcolor}

\title{Higher-Order QMC Galerkin FEM for Singular Perturbation:
	\\ 
	Reaction--Diffusion with Affine-Parametric Reaction}
\date{June 29, 2026}

\author[$\dagger$]{F. Rohner}
\author[$\dagger$]{Ch. Schwab} 

\affil[$\dagger$]{
	\footnotesize Seminar for Applied Mathematics, ETH Z\"{u}rich\\ 
	R\"{a}mistrasse 101, 8092 Z\"urich, Switzerland\\
	\texttt{\{fabian.rohner,christoph.schwab\}@sam.math.ethz.ch}
}

\begin{document}
\maketitle

{\small
\begin{abstract}
We analyze higher-order quasi--Monte Carlo (QMC) Galerkin discretizations for a
singularly perturbed reaction--diffusion problem with an affine-parametric
reaction coefficient. We derive error estimates that are robust with respect to the
perturbation parameter \(\varepsilon\). 
To this end, we establish \(\varepsilon\)-uniform 
stability and mixed-parametric regularity estimates in an
\(\varepsilon\)-weighted energy norm, with constants independent of the parameter dimension and of \(\varepsilon\). 
These estimates imply robust dimension truncation estimates and
dimension-independent higher-order QMC error bounds for linear quantities of interest. 
Assuming an abstract, \(\varepsilon\)-robust finite element approximation property, 
we combine the parametric estimates with the spatial discretization analysis 
to derive a general fully discrete QMC--FE error estimate with constants independent 
of both the parameter dimension and \(\varepsilon\). 
We verify this approximation property for a one-dimensional model problem 
using continuous piecewise affine finite elements on Shishkin meshes. 
Numerical experiments illustrate the predicted robustness and convergence rates.
\end{abstract}
	\textbf{Key Words:}
	singular perturbation, Quasi--Monte Carlo integration, Shishkin meshes, uncertainty quantification
        \\
	\textbf{Subject Classification:}
	65N30, 65D30, 65D32
}

\section{Introduction}
\label{sec:Intro}
\emph{Singularly perturbed PDEs} 
arise in numerous areas of science and engineering, 
typically as a consequence of (nondimensional) parameters in the PDE data 
that are small or large relative to the remaining inputs.
Examples occur in solid mechanics, in the study of so-called ``thin solids'' such as beams, plates, and shells.
Here, the small parameter $\eps$ is the nondimensional ratio of the 
thickness to the diameter of the structure under consideration.
Further examples include viscous incompressible flow, where the relevant large parameter is the Reynolds number, and eddy current problems in time-harmonic electromagnetics, 
where the perturbation parameter occurs in the dielectric constants of thin 
coatings of conductors.

Solutions of singularly perturbed elliptic boundary value problems are known to exhibit local elliptic regularity that is generally not uniform with respect to the perturbation parameter. Such solution families typically admit an additive decomposition into a
so-called ``smooth part'' which satisfies standard elliptic regularity estimates 
\emph{uniformly with respect to the perturbation parameter}, and
so-called ``layer parts'' which typically \emph{decay exponentially} 
with (a power of) the 
distance to the boundary of the domain and/or to the supports of, for example, source terms.

\emph{Uncertainty Quantification (UQ for short)} in PDE models of science and 
engineering has attracted considerable attention over the past decade. 

From a mathematical perspective, this requires the numerical solution of 
PDEs for so-called ``ensembles'' of input data, 
together with the characterization of the resulting outputs, 
in particular quantities of interest (QoIs), in terms of the uncertain inputs.

The aim of this work is to analyze elliptic singularly perturbed reaction--diffusion problems with an uncertain reaction coefficient. Let $0<\eps\leq 1$ denote the nondimensional singular perturbation parameter. We seek to establish $\eps$-uniform convergence rates, free from the curse of dimensionality, for quasi--Monte Carlo (QMC) integration combined with finite element (FE) approximations of a model linear elliptic parametric singularly perturbed reaction--diffusion problem.

As is well known,
singular perturbations in elliptic reaction--diffusion PDEs 
induce so-called \emph{boundary layers}, 
i.e. 
exponentially decaying solution components in the physical domain $D$.
This prevents direct application of elliptic regularity theory in Sobolev spaces,
and obstructs standard convergence bounds for finite element methods. 
While finite element methods for 
singularly perturbed reaction--diffusion and convection--diffusion 
problems are well-studied (see, e.g., \cite{RST2nd,Linss1985} and the references there), 
mathematical error analysis of such PDEs with uncertain inputs 
remains largely unexplored. 
In particular, the stability constants, norm equivalences, and adjoint regularity estimates 
required to analyze QMC integration all depend on the perturbation parameter $\eps$, 
and uniform bounds as $\eps \to 0$ are not available in the existing QMC literature. 
Establishing $\eps$-robust regularity with respect to the spatial variable 
and 
with respect to the parametric variables is therefore a central obstacle which is 
addressed in the present work.

The remainder of this introduction presents the parametric singularly perturbed reaction--diffusion model problem under consideration and lists the structural assumptions required for the analysis.

\paragraph{Problem Setting}
\label{sec:set}

Let $D \subset \bR^d$ be a bounded domain 
and let $U\coloneqq[-1/2,1/2]^{\bN}$.
For a nondimensional parameter 
$0 < \eps \leq 1$, and for $\bm{y}\in U$, we consider the  
\emph{parametric, linear, singularly perturbed, elliptic reaction--diffusion boundary value problem} (BVP): 
find $u_{\eps}(\bm{y})$ such that
\begin{align}
	\mathcal{L}_{\eps} u_{\eps}(\bm{y}) \coloneqq
	-\eps^2 \Delta_x u_{\eps}(x; \bm{y})
	+ b(x; \bm{y}) u_{\eps}(x; \bm{y})
	&= f(x), \quad x \in D,
	\label{eq:de}
	\\
	u_{\eps}|_{\partial D}
	&= 0,
	\label{eq:bc}
\end{align}
where 
$b(x; \bm{y})$ is a parametric reaction coefficient depending on $x\in D$ 
and on the parameter $\bm{y} \in U$, 
and $f \colon D \to \bR$ is a prescribed fixed source term.

Specifically, we consider the model in which the parametric reaction coefficient $b(x;\bm{y})$ 
depends affinely on the parameter $\bm{y} \in U$, i.e.
\begin{ass}\label{ass:affine-param}
	b(x; \bm{y}) \coloneqq \overline{b}(x) + \sum_{j \geq 1} y_j \psi_j(x), \qquad x \in D, \quad \bm{y} \in U,
\end{ass}
for some functions $\overline{b} \in L^{\infty}(D)$ and $\psi_j \in L^\infty(D)$, $j \geq 1$.

To ensure that the series is well-defined for a.e. $x \in D$ and $\bm{y} \in U$, we assume
\begin{ass}\label{ass:summability}
	\sum_{j \geq 1} \norm{\psi_{j}}_{L^{\infty}(D)} < \infty.
\end{ass}
We shall later impose additional smoothness conditions on $\overline{b}$ and $\{\psi_j\}_{j \ge 1}$, as needed.

Moreover, we assume uniform bounds on the reaction coefficients 
\begin{ass}\label{ass:bbounds}
0 < b_{0} \leq b(x; \bm{y}) \leq	b_{1} < \infty, \qquad \mathrm{a.e.}\; x \in D, \quad \bm{y} \in U,
\end{ass}
for some constants $b_{0}, b_{1}$.

	
The quantity of interest is the expectation of a linear functional of the
solution. Assuming that the parameter $\bm{y}$ is sampled from the uniform distribution on $U$,
this amounts to computing
\begin{equation}
	\label{eq:integral-functional-sol}
	I(G(u_{\eps}))
	\coloneqq \int_U G(u_{\eps}(\cdot;\bm y))\,\bm{\pi}(\dif\bm y),
\end{equation}
where $G$ is an $\eps$-independent, bounded linear functional on the relevant solution space 
and $\bm{\pi}$ denotes the uniform probability measure on $U$.
The infinite-dimensional parameter dependence is handled by truncating the expansion
in \cref{ass:affine-param} after $s$ terms. If $u_{\eps,s}^h$ denotes
the finite element approximation of the corresponding truncated solution, then
we approximate the $s$-dimensional integral~\eqref{eq:integral-functional-sol} by a QMC quadrature rule
\begin{equation}
	\label{eq:qmc-rule-sol}
	Q_{N,s}(G(u_{\eps,s}^h))
	\coloneqq \frac1N\sum_{n=0}^{N-1}
	G\left(u_{\eps,s}^h\left(\cdot;\bm y_n-\frac{\bm{1}}{\bm{2}}\right)\right),
\end{equation}
where the points $\bm{y}_{0},\dots, \bm{y}_{N-1} \in [0,1]^s$ are chosen according to a higher-order lattice rule.
	
A central ingredient (e.g. \cite{kuo2012QMC,dick2014higher,Kuo2015Multilevel})
for the analysis of the QMC error is the summability of the fluctuation coefficients $\psi_j$ in \eqref{ass:affine-param}.
For this reason, we will later strengthen~\eqref{ass:summability} 
by assuming the existence of $0 < p < 1$, 
such that
\begin{ass}\label{ass:p-summability}
	\sum_{j \geq 1} \norm{\psi_{j}}^p_{L^{\infty}(D)} < \infty.
\end{ass}
We also impose the \emph{ordering}
\begin{ass}\label{ass:ordering}
	\norm{\psi_1}_{L^\infty(D)} \geq \norm{\psi_2}_{L^\infty(D)} \geq \cdots \ge 0.
\end{ass}

To approximate the parametric solutions $u_{\eps}(\bm{y})$, $\bm{y} \in U$, 
in the physical domain $D$ using a finite element discretization, 
one typically requires additional smoothness of the coefficient $b(\cdot; \bm{y})$ 
with respect to the spatial variable $x$ in order to obtain 
$\eps$-robust convergence of the FE approximation.

A sufficient criterion for $b(\cdot; \bm{y}) \in H^k$, $k \geq 1$, 
is to assume in addition to \cref{ass:summability,ass:p-summability} 
that $\overline{b} \in H^k(D)$ and $(\norm{\psi_{j}}_{H^k(D)})_{j \ge 1} \in \ell^1(\bN)$.
%
See also \cite{cohen2010Nterm} for a more detailed discussion of possible choices for bases and expansions of the coefficient $b$.

\paragraph{Contributions}	
The analysis in this paper builds on the established QMC theory for elliptic PDEs with parametric coefficients. In particular, parametric regularity estimates were developed in \cite{cohen2010Nterm}, their use in QMC error analysis was studied in \cite{kuo2012QMC}, and higher-order QMC extensions were obtained in \cite{dick2014higher}. Dimension truncation estimates were sharpened in \cite{gantner2018dimension}, with further extensions to log-normal coefficients and nonlinear parameter dependence in \cite{guth2024lognormal}; see also the survey \cite{kuo2016survey}.

The main contribution of this work is to show that this framework can be made robust with respect to the singular perturbation parameter $\eps$ for the reaction--diffusion problem \eqref{eq:de}--\eqref{eq:bc}. More precisely, we establish $\eps$-uniform stability and mixed-parametric regularity estimates in a weighted energy norm, which in turn yield dimension truncation bounds with constants independent of $\eps\in(0,1]$. Combining these regularity estimates with higher-order QMC rules based on SPOD weights, we establish a dimension-independent QMC convergence rate with constants uniform in $\eps$. Moreover, the QMC construction is itself independent of $\eps$: a single generating vector, and hence a single QMC point set, can be used for all $\eps\in(0,1]$. Finally, we formulate an abstract robust spatial approximation assumption and verify it for a one-dimensional model problem using continuous piecewise affine finite elements on Shishkin meshes. This yields a fully discrete QMC--FE error bound that is uniform in $\eps$, the truncation dimension $s$, the number of QMC points $N$, and the spatial discretization parameter.
	
To the best of our knowledge, 
this is the first QMC--FE error analysis for randomly parametrized singularly 
perturbed reaction--diffusion problems with constants fully robust in $\eps$. 
Related QMC--FE analyses for singularly perturbed convection--diffusion problems with random velocity fields are available, but concern a different model class and employ $\eps$-dependent QMC weights. In contrast, for the reaction--diffusion problem considered here, the QMC weights are independent of $\eps$, so that a single family of QMC point sets can be used uniformly for all $\eps\in(0,1]$.
\paragraph{Structure}
This text is structured as follows: 
\cref{sec:param-reg} establishes $\eps$-uniform parametric regularity results for the model problem. 
\Cref{sec:dim-trunc} provides a brief review of a
dimension truncation bound based on the $p$-summability \cref{ass:p-summability}, 
which we verify to carry over directly to the $\eps$-weighted norm $\norm{\cdot}_{1,\eps,D}$ 
with constants that are uniform with respect to the perturbation parameter $\eps$. \Cref{sec:qmc} 
recalls the construction of higher-order QMC rules and 
corresponding error bounds based on the parametric regularity estimates. 
\Cref{sec:spatial-approx} states a spatial discretization
property and verifies it for a Shishkin finite
element method in one space dimension. \Cref{sec:combined-error} combines
the three error sources into a fully discrete estimate, and \cref{sec:numerics} presents numerical experiments.

\paragraph{Notation}	
We finish the introduction with notation used throughout the paper. 
Let $\cF$ be
the set of all finitely supported multi-indices
$\bm\nu=(\nu_j)_{j\geq1}\in\bN_0^{\bN}$. For $\bm\nu\in\cF$, set
\[
	\supp(\bm\nu)\coloneqq\{j\in\bN:\nu_j\neq0\},
	\qquad
	\abs{\bm\nu}\coloneqq\sum_{j\geq1}\nu_j,
	\qquad
	\bm\nu!\coloneqq\prod_{j\geq1}\nu_j!.
\]
For two multi-indices $\bm\beta,\bm\nu\in\cF$, we write
$\bm\beta\leq\bm\nu$ if $\beta_j\leq\nu_j$ for all $j$, and define
\[
	\binom{\bm\nu}{\bm\beta}
	\coloneqq \prod_{j\geq1}\binom{\nu_j}{\beta_j},
	\qquad
	\bm\gamma^{\bm\nu}
	\coloneqq \prod_{j\geq1}\gamma_j^{\nu_j}.
\]

Our analysis is carried out using an $\eps$-weighted norm: For $\eps \in (0,1]$, we define the norm $\norm{\cdot}_{1,\eps,D}$ on $H_0^1(D)$ given by
\begin{equation}\label{eq:def-eps-norm}
	\norm{v}_{1,\eps,D}^2 \coloneqq \eps^2 \norm{\nabla v}_{L^2(D)}^2 + \norm{v}_{L^2(D)}^2,
\end{equation}
and let 
\begin{equation}\label{eq:def-V-eps}
	V_\eps \coloneqq \overline{C_0^\infty(D)}^{\norm{\cdot}_{1,\eps,D}}.
\end{equation}
For $\eps>0$, this space is $H_0^1(D)$ as a set, but with an
$\eps$-dependent norm equivalence.

Using an argument similar to \cite[Prop. 8.14]{brezis2011functional}, 
in the case $D \subset \bR$ the norm on the dual space $V_{\eps}'$ 
can be characterized as follows:
with $(\cdot, \cdot)$ denoting the $L^2(D)$ inner product,
\begin{equation*}
	\forall f \in V_{\eps}'\; \exists g_0, g_1 \in L^2(D): \; \inp{f}{v}_{V_{\eps}' \times V_{\eps}} = (g_0(f), v)  + (g_1(f), v'), \qquad \forall v \in V_{\eps},
\end{equation*}
and
\begin{equation*}
	\norm{f}_{V_{\eps}'} \coloneqq \sup_{v \in V_{\eps}} \frac{\abs{\inp{f}{v}_{V_{\eps}' \times V_{\eps}}}}{\norm{v}_{1,\eps,D}} \simeq \norm{g_0(f)}_{L^2(D)} + \eps^{-1} \norm{g_1(f)}_{L^2(D)},
\end{equation*}
where the constant hidden in $\simeq$ is independent of $\eps$.
In the case $\eps = 0$, $V_{\eps}$ and $V_{\eps}'$ each reduce to $L^2(D)$. 
This motivates the following notation, for all $\eps \in [0,1]$:
\begin{equation*}
	\norm{f}_{-1,\eps,D} \coloneqq \norm{f}_{V_{\eps}'}, \qquad \forall f \in V_{\eps}'.
\end{equation*}

Throughout, $C>0$ denotes a generic, positive constant whose values can change 
at each use. 
Its value is, however, always independent of $\eps$ and of $s$, the parameter
dimension in numerical simulation.

\section{Robust parametric regularity}
\label{sec:param-reg}
In this section we establish the basic well-posedness, stability, and parametric
regularity estimates for the singularly perturbed reaction--diffusion problem
\eqref{eq:de}--\eqref{eq:bc}. The central point is that all estimates are
uniform with respect to the perturbation parameter $\eps \in (0,1]$ when the
solution is measured in the weighted norm $\norm{\cdot}_{1,\eps,D}$.
\subsection{Parametric weak formulation}
\label{subsec:param-weak-formulation}
For fixed $\eps \in (0,1]$ and $\bm{y} \in U$, the weak formulation of
\eqref{eq:de}--\eqref{eq:bc} is: find $u_{\eps}(\cdot;\bm{y}) \in V_{\eps}$
such that
\begin{equation}
	\label{eq:param-weak-problem}
	\eps^2 \int_D \nabla u_{\eps}(x;\bm{y}) \cdot \nabla v(x) \dif x
	+ \int_D b(x;\bm{y}) u_{\eps}(x;\bm{y}) v(x) \dif x
	= \inp{f}{v}_{V_{\eps}'\times V_{\eps}},
	\qquad \forall v \in V_{\eps}.
\end{equation}
When $f\in L^2(D)$, the right-hand side is simply
$(f,v)_{L^2(D)}$.

We write the associated parametric bilinear forms as
\begin{equation*}
	B_{\eps}(u,v; \bm{y}) \coloneqq \eps^2 \int_{D} \nabla u(x) \cdot \nabla v(x) \dif x + \int_{D} b(x; \bm{y}) u(x) v(x) \dif x, \qquad u,v \in V_{\eps}.
\end{equation*}
By \cref{ass:bbounds}, this bilinear form is continuous and coercive,
uniformly in $\eps$ and $\bm{y}$. Indeed, for all $u,v\in V_{\eps}$,
\begin{align*}
	\abs{B_{\eps}(u,v;\bm{y})}
	&\leq \eps^2 \norm{\nabla u}_{L^2(D)}\norm{\nabla v}_{L^2(D)}
	+ b_1 \norm{u}_{L^2(D)}\norm{v}_{L^2(D)} \\
	&\leq (1+b_1)\norm{u}_{1,\eps,D}\norm{v}_{1,\eps,D},
\end{align*}
and
\begin{align*}
	B_\eps(u,u; \bm{y}) &= \eps^2 \int_{D} \abs{\nabla u(x)}^2 \dif x + \int_{D} b(x; \bm{y}) \abs{u(x)}^2 \dif x\\
	&\geq \min\left\{1, b_{0}\right\} \norm{u}_{1,\eps,D}^2.
\end{align*}
\begin{remark}
	For fixed $\bm{y}\in U$, the energy norm
	\begin{equation}
		\label{eq:def-energy-norm}
		\norm{w}_{E,D}^2
		\coloneqq B_{\eps}(w,w;\bm{y})
		= \eps^2 \int_D \abs{\nabla w(x)}^2\dif x
		+ \int_D b(x;\bm{y})w(x)^2\dif x
	\end{equation}
	is equivalent to $\norm{\cdot}_{1,\eps,D}$, with constants depending only on
	$b_0$ and $b_1$.
\end{remark}

The Lax-Milgram theorem yields the existence of a unique solution for any $f \in V_{\eps}'$ (hence also for $f \in L^2(D)$). In particular, there exists a unique element $u_{\eps}(\cdot;  \bm{y}) \in V_{\eps}$ such that
\begin{equation}\label{eq:var-form}
	B_{\eps}(u_{\eps}(\cdot;\bm{y}),v; \bm{y}) = \inp{f}{v}_{V_{\eps}' \times V_{\eps}}, \qquad \forall v \in V_{\eps}.
\end{equation} 
Moreover,
\begin{equation}\label{eq:bound-sol}
	\norm{u_{\eps}(\cdot; \bm{y})}_{1,\eps,D} \leq \frac{1}{\min\{1,b_{0}\}} \norm{f}_{-1,\eps,D} \eqqcolon B \norm{f}_{-1,\eps,D}, \qquad \forall \bm{y} \in U.
\end{equation}
\begin{remark}
	Although the norm $\norm{f}_{-1,\eps,D}$ may depend on $\eps$, the embedding
	$L^2(D)\hookrightarrow V_{\eps}'$ has norm at most one. Hence, if
	$f\in L^2(D)$, then \eqref{eq:bound-sol} implies the uniform bound
	\[
		\norm{u_{\eps}(\cdot;\bm{y})}_{1,\eps,D}
		\leq B\norm{f}_{L^2(D)}.
	\]
\end{remark}

We shall also use the following stability estimate with respect to perturbations
of the reaction coefficient.
\begin{lemma}[Stability]
	\label{lem:coefficient-stability}
	Let $b,\tilde b\in L^{\infty}(D)$ satisfy
	\[
	0<b_0\leq b,\tilde b\leq b_1<\infty \quad \text{a.e. in }D.
	\]
	For $\eps\in(0,1]$, let $u_{\eps},\tilde u_{\eps}\in V_{\eps}$ solve
	\begin{equation}
		\label{eq:var-b}
		\eps^2\int_D \nabla u_{\eps}(x) \cdot \nabla v(x) \dif x
		+ \int_D b(x) u_{\eps}(x) v(x) \dif x
		= \inp{f}{v}_{V_{\eps}'\times V_{\eps}},
		\qquad \forall v\in V_{\eps},
	\end{equation}
	and
	\begin{equation}
		\label{eq:var-btilde}
		\eps^2\int_D \nabla \tilde u_{\eps}(x) \cdot \nabla v(x) \dif x
		+ \int_D \tilde b(x) \tilde u_{\eps}(x) v(x) \dif x
		= \inp{f}{v}_{V_{\eps}'\times V_{\eps}},
		\qquad \forall v\in V_{\eps}.
	\end{equation}
	Then
	\begin{equation}
		\label{eq:stab}
		\norm{u_{\eps}-\tilde u_{\eps}}_{1,\eps,D}
		\leq
		\frac{\norm{b-\tilde b}_{L^{\infty}(D)}}{\min\{1,b_0\}^2}
		\norm{f}_{-1,\eps,D}.
	\end{equation}
\end{lemma}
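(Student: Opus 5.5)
The plan is to subtract the two variational problems \eqref{eq:var-b} and \eqref{eq:var-btilde} and test with the error itself. Set $e \coloneqq u_{\eps}-\tilde u_{\eps}\in V_{\eps}$. Subtracting the identities gives, for all $v\in V_{\eps}$,
\[
\eps^2\int_D \nabla e\cdot\nabla v \dif x + \int_D b\, e\, v\dif x = \int_D (\tilde b - b)\,\tilde u_{\eps}\, v\dif x .
\]
Thus $e$ solves the same problem as $u_{\eps}$, with coefficient $b$ and with the modified right-hand side $g\coloneqq(\tilde b-b)\tilde u_{\eps}\in L^2(D)$.

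Taking $v=e$ and using the coercivity bound for $B_{\eps}$ with coefficient $b$, I will obtain
\[
\min\{1,b_0\}\norm{e}_{1,\eps,D}^2 \le \norm{b-\tilde b}_{L^\infty(D)}\norm{\tilde u_{\eps}}_{L^2(D)}\norm{e}_{L^2(D)} .
\]
Since $\norm{\cdot}_{L^2(D)}\le\norm{\cdot}_{1,\eps,D}$, dividing by $\norm{e}_{1,\eps,D}$ yields
\[
\norm{e}_{1,\eps,D}\le \frac{\norm{b-\tilde b}_{L^\infty(D)}}{\min\{1,b_0\}}\norm{\tilde u_{\eps}}_{1,\eps,D}.
\]
Equivalently, this follows from the a priori bound \eqref{eq:bound-sol} applied to the right-hand side $g$, together with the fact that the embedding $L^2(D)\hookrightarrow V_{\eps}'$ has norm at most one.

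Finally, I apply the stability estimate \eqref{eq:bound-sol} to $\tilde u_{\eps}$. This is valid because $\tilde b$ satisfies the same bounds $b_0\le\tilde b\le b_1$, so the same coercivity constant applies and gives $\norm{\tilde u_{\eps}}_{1,\eps,D}\le \min\{1,b_0\}^{-1}\norm{f}_{-1,\eps,D}$. Combining the two factors of $\min\{1,b_0\}^{-1}$ gives \eqref{eq:stab}.

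There is no real obstacle here. The only point requiring care is to use the $L^2$ part of the weighted norm at both ends. This ensures the perturbation term $(\tilde b-b)\tilde u_{\eps}$ is controlled without any negative power of $\eps$, which is what makes the constant independent of $\eps$.
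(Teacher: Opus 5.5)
Your proposal is correct and is essentially the paper's proof. The paper likewise subtracts the two problems and applies \eqref{eq:bound-sol} to the error with right-hand side $-(b-\tilde b)\tilde u_{\eps}$ (which you reproduce equivalently by testing with $e$), bounds that functional via $\norm{\tilde u_{\eps}}_{L^2(D)}\le\norm{\tilde u_{\eps}}_{1,\eps,D}$, and finishes with \eqref{eq:bound-sol} for $\tilde u_{\eps}$.
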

\begin{proof}
	Set $w\coloneqq u_{\eps}-\tilde u_{\eps}$. Subtracting
	\eqref{eq:var-btilde} from \eqref{eq:var-b} gives
	\begin{equation*}
		\eps^2\int_D \nabla w(x) \cdot \nabla v(x) \dif x
		+ \int_D b(x) w(x) v(x) \dif x
		= -\int_D (b-\tilde b)(x) \tilde u_{\eps}(x) v(x) \dif x,
		\qquad \forall v\in V_{\eps}.
	\end{equation*}
	Thus $w$ solves the variational problem with coefficient $b$ and right-hand
	side $L(v)\coloneqq -\int_D (b-\tilde b)\tilde u_{\eps}v\dif x$. The solution
	estimate \eqref{eq:bound-sol} gives
	\[
	\norm{w}_{1,\eps,D}\leq \frac{1}{\min\{1,b_0\}}\norm{L}_{-1,\eps,D}.
	\]
	Furthermore,
	\begin{align*}
		\norm{L}_{-1,\eps,D}
		&\leq \norm{b-\tilde b}_{L^{\infty}(D)}
		\norm{\tilde u_{\eps}}_{L^2(D)}
		\leq \norm{b-\tilde b}_{L^{\infty}(D)}
		\norm{\tilde u_{\eps}}_{1,\eps,D} \\
		&\leq
		\frac{\norm{b-\tilde b}_{L^{\infty}(D)}}{\min\{1,b_0\}}
		\norm{f}_{-1,\eps,D},
	\end{align*}
	which proves \eqref{eq:stab}.
\end{proof}
\subsection{Parametric regularity for affine reaction coefficients}
\label{subsection:param-reg-affine}
We now use the affine representation from \cref{ass:affine-param} to derive bounds on mixed derivatives of the parameter-to-solution map. At boundary points of $U$, derivatives are interpreted through the affine extension in the relevant coordinate directions, which preserves uniform positivity for sufficiently small perturbations.

Following the inductive argument of \cite{cohen2010Nterm}, we establish $\eps$-robust bounds for the partial derivatives of the parameter-to-solution map
$\bm y \mapsto u_\eps(\cdot;\bm y)$. Throughout, $\eps\in(0,1]$ is arbitrary but fixed.
\begin{theorem}\label{thm:param-reg}
	Let $\eps \in (0,1]$ and let the assumptions \eqref{ass:summability} and \eqref{ass:bbounds} be satisfied. At any $\bm{y} \in U$, the mapping $\bm{y} \mapsto u_{\eps}(\cdot; \bm{y})$ admits partial derivatives $\partial_{\bm{y}}^{\bm{\nu}} u_{\eps}(\cdot; \bm{y}) \in V_{\eps}$ for every $\bm{\nu} \in \cF$.
\end{theorem}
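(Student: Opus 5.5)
We will prove existence of all partial derivatives by induction on $\abs{\bm\nu}$. At each step we identify the candidate derivative as the solution of a variational problem of the same type as \eqref{eq:var-form}, and then verify differentiability directly from difference quotients, using only the uniform coercivity and \cref{lem:coefficient-stability}.

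\emph{Candidate derivatives.} Formally differentiating $B_\eps(u_\eps(\cdot;\bm y),v;\bm y)=\inp{f}{v}$ with respect to $\bm y$ uses only the Leibniz rule. Since $b$ is affine in $\bm y$, we have $\partial_{y_j}b=\psi_j$, and all higher derivatives of $b$ vanish. We therefore define $w_{\bm 0}\coloneqq u_\eps(\cdot;\bm y)$ and, recursively for $\bm\nu\neq\bm 0$, let $w_{\bm\nu}\in V_\eps$ be the unique solution of
\[
	B_\eps(w_{\bm\nu},v;\bm y)= -\sum_{j\in\supp(\bm\nu)} \nu_j \int_D \psi_j\, w_{\bm\nu-\bm e_j}\, v\dif x,\qquad \forall v\in V_\eps .
\]
The right-hand side is a functional on $V_\eps$ whose $\norm{\cdot}_{-1,\eps,D}$-norm is at most $\sum_j\nu_j\norm{\psi_j}_{L^\infty(D)}\norm{w_{\bm\nu-\bm e_j}}_{L^2(D)}$. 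Hence Lax--Milgram applies with the $\eps$-uniform coercivity constant $\min\{1,b_0\}$, and $w_{\bm\nu}\in V_\eps$ is well defined.

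\emph{Identification with derivatives.} We show inductively that $\partial_{y_j} w_{\bm\nu}(\cdot;\bm y)$ exists in $V_\eps$ and equals $w_{\bm\nu+\bm e_j}(\cdot;\bm y)$. Fix $j$ and $h$ small, and write $\bm y_h=\bm y+h\bm e_j$. At boundary points of $U$ we use the affine extension of $b$ in the $j$th coordinate. Because of \eqref{ass:summability}, we have $\norm{b(\cdot;\bm y_h)-b(\cdot;\bm y)}_{L^\infty}=\abs{h}\norm{\psi_j}_{L^\infty}$, so for small $h$ the coefficient still satisfies $b\ge b_0/2>0$, and all Lax--Milgram and stability constants remain uniform. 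For $\bm\nu=\bm 0$, subtracting the equations at $\bm y_h$ and $\bm y$ gives
\[
	B_\eps\bigl(u_\eps(\bm y_h)-u_\eps(\bm y),v;\bm y\bigr)=-h\int_D\psi_j\,u_\eps(\bm y_h)\,v\dif x .
\]
Let $r_h\coloneqq h^{-1}(u_\eps(\bm y_h)-u_\eps(\bm y))-w_{\bm e_j}$. It satisfies
\[
	B_\eps(r_h,v;\bm y)=-\int_D\psi_j\bigl(u_\eps(\bm y_h)-u_\eps(\bm y)\bigr)v\dif x .
\]
Coercivity gives $\norm{r_h}_{1,\eps,D}\le C\norm{\psi_j}_{L^\infty}\norm{u_\eps(\bm y_h)-u_\eps(\bm y)}_{L^2}$. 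By \cref{lem:coefficient-stability}, this is at most $C\abs{h}\norm{\psi_j}_{L^\infty}^2\norm{f}_{-1,\eps,D}\to0$. For general $\bm\nu$ the argument is the same. Subtracting the defining equations of $w_{\bm\nu}$ at $\bm y_h$ and $\bm y$ produces the term $-h\int\psi_j w_{\bm\nu}(\bm y_h)v$ together with the differences of the right-hand sides. By the induction hypothesis, the difference quotients of $w_{\bm\nu-\bm e_i}$ converge in $V_\eps$, hence in $L^2$, to $w_{\bm\nu-\bm e_i+\bm e_j}$. Hence $h^{-1}(w_{\bm\nu}(\bm y_h)-w_{\bm\nu}(\bm y))$ converges to the solution of the recursion for $\bm\nu+\bm e_j$, which is $w_{\bm\nu+\bm e_j}$.

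\emph{Main obstacle.} The key technical point is continuity: we need $w_{\bm\nu}(\bm y_h)\to w_{\bm\nu}(\bm y)$ in $L^2$ in order to pass to the limit in the term $h^{-1}\cdot h\int\psi_j w_{\bm\nu}(\bm y_h)v$. We will prove this Lipschitz continuity by the same induction. The difference $w_{\bm\nu}(\bm y_h)-w_{\bm\nu}(\bm y)$ solves a problem whose right-hand side is bounded by $\abs{h}$ times norms of lower-order $w$'s, together with differences of lower-order $w$'s; this mirrors the proof of \cref{lem:coefficient-stability}. It is also convenient to note that, since the recursion does not depend on the order in which the $\bm e_j$ are added, the iterated partial derivatives are independent of the order of differentiation, so $\partial^{\bm\nu}_{\bm y}u_\eps=w_{\bm\nu}$ is unambiguous. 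Throughout, all constants depend only on $b_0$, $b_1$ and $\norm{\psi_j}_{L^\infty}$, and not on $\eps$.
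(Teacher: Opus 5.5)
Your proposal is correct and follows essentially the same route as the paper: difference quotients, the stability estimate of \cref{lem:coefficient-stability}, and the uniform well-posedness of $B_\eps(\cdot,\cdot;\bm y)$ to pass to the limit, iterated over $\abs{\bm\nu}$. You are more explicit than the paper's ``iterating this argument'': you define the candidates $w_{\bm\nu}$ up front through the recursion that the paper later states as \eqref{eq:partial-deriv-weak-form}, and you isolate what the higher-order step needs, namely continuity of $\bm y\mapsto w_{\bm\nu}(\cdot;\bm y)$ and independence of mixed derivatives from the order of differentiation.
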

\begin{proof}
	It suffices to show the existence of the first-order derivatives; the existence of higher-order derivatives $\partial_{\bm{y}}^{\bm{\nu}} u_{\eps}(\cdot; \bm{y})$ for $\bm{\nu} \in \cF$ can be derived inductively following the same reasoning.
	
	Fix $j \ge 1$ and $\bm{y} \in U$. We first show that the first-order partial derivatives $\partial_{y_j} u_{\eps}(\cdot; \bm{y})$ exist in $V_{\eps}$.
	For $h \neq 0$ sufficiently small, set
	\begin{equation}
		w_h(x;\bm{y}) = \frac{u_{\eps}(x; \bm{y} + h \bm{e}_j) - u_{\eps}(x; \bm{y})}{h},
	\end{equation}
	where $\bm{e}_j$ is the Kronecker sequence $(\bm{e}_j)_k = \delta_{j,k}$.
	Here $h$ is chosen such that the coefficient $b(\cdot; \bm{y} + h \bm{e}_j)$ remains bounded below by a positive constant; for
	instance this holds if
	$\abs h\norm{\psi_j}_{L^{\infty}(D)}\leq b_0/2$.
	
	Subtracting the two weak formulations and dividing by $h$,
	\begin{align*}
		0 &= \eps^2 \int_{D} \nabla u_{\eps}(x; \bm{y}+ h \bm{e}_j) \cdot \nabla v(x) \dif x - \eps^2 \int_{D} \nabla u_{\eps}(x; \bm{y}) \cdot \nabla v(x) \dif x\\
		&\qquad + \int_{D} b(x; \bm{y}+ h \bm{e}_j) u_{\eps}(x; \bm{y}+ h \bm{e}_j) v(x) \dif x - \int_{D} b(x; \bm{y}) u_{\eps}(x; \bm{y}) v(x) \dif x\\
		&= h \left[\eps^2 \int_{D} \nabla w_h(x; \bm{y}) \cdot \nabla v(x) \dif x + \int_{D} b(x; \bm{y}) w_h(x; \bm{y}) v(x) \dif x\right]\\
		&\qquad + h \int_{D} \psi_j(x) u_{\eps}(x; \bm{y}+ h \bm{e}_j) v(x) \dif x.
	\end{align*}
	This shows that $w_h \in V_{\eps}$ satisfies
	\begin{equation*}
		B_{\eps}(w_h,v;\bm y)
		= -\int_D \psi_j(x)u_{\eps}(x;\bm y+h\bm e_j)v(x)\dif x,
		\qquad \forall v\in V_{\eps}.
	\end{equation*}
	Denote the right-hand side by $L_h(v)$. The stability estimate
	\eqref{eq:stab}, applied to the coefficients
	$b(\cdot;\bm y+h\bm e_j)$ and $b(\cdot;\bm y)$, implies
	\[
	\norm{u_{\eps}(\cdot;\bm y+h\bm e_j)-u_{\eps}(\cdot;\bm y)}_{1,\eps,D}
	\leq C_j\abs h\norm{f}_{-1,\eps,D},
	\]
	where $C_j$ is independent of $h$. Hence $L_h\to L_0$ in $V_{\eps}'$, with
	\[
		L_0(v)=-\int_D \psi_j(x)u_{\eps}(x;\bm y)v(x)\dif x.
	\]
	
	By the uniform well-posedness of the variational problem with coefficient
	$b(\cdot;\bm y)$, it follows that $w_h$ converges in $V_{\eps}$ to the unique
	solution $w_0\in V_{\eps}$ of
	\begin{equation}
		\label{eq:u-eps-deriv-order-1}
		B_{\eps}(w_0,v;\bm y)
		= -\int_D \psi_j(x)u_{\eps}(x;\bm y)v(x)\dif x,
		\qquad \forall v\in V_{\eps}.
	\end{equation}
	Thus $\partial_{y_j}u_{\eps}(\cdot;\bm y)=w_0$. Iterating this argument gives
	the existence of all mixed derivatives indexed by $\bm\nu\in\cF$.
\end{proof}
\begin{theorem}\label{thm:param-reg-bounds}
	For all $\eps \in [0,1]$ and for any $\bm{\nu} \in \cF$, it holds that
	\begin{equation}\label{eq:bound-partial-deriv}
		\sup_{\bm{y} \in U} \norm{\partial_{\bm{y}}^{\bm{\nu}} u_{\eps}(\cdot; \bm{y})}_{1,\eps,D} \leq B \, \abs{\bm{\nu}}! \, \bm{b}^{\bm{\nu}} \norm{f}_{-1,\eps, D},
	\end{equation}
	with the $\eps$-independent constants 
	\begin{equation}\label{eq:b-sequence-partial-deriv}
		B \coloneqq \frac{1}{\min\{1,b_{0}\}}, \qquad b_j \coloneqq \frac{\norm{\psi_j}_{L^{\infty}(D)}}{\min\{1, b_{0}\}}, \quad j \geq 1.
	\end{equation}
\end{theorem}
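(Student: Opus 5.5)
The plan is to follow the inductive argument of \cite{cohen2010Nterm}, carried out entirely in the $\eps$-weighted norm $\norm{\cdot}_{1,\eps,D}$. The first step is a recursion for the parametric derivatives. By \cref{thm:param-reg}, all derivatives $\partial_{\bm y}^{\bm\nu}u_{\eps}(\cdot;\bm y)\in V_{\eps}$ exist. Since $b(\cdot;\bm y)$ is affine in $\bm y$, we have $\partial_{y_j}b=\psi_j$, and all higher derivatives of $b$ vanish. Applying $\partial_{\bm y}^{\bm\nu}$ to \eqref{eq:var-form} via the Leibniz rule, where the right-hand side $f$ is $\bm y$-independent, gives for every $\bm\nu\in\cF\setminus\{\bm 0\}$ and all $v\in V_{\eps}$
\begin{equation*}
	B_{\eps}\bigl(\partial_{\bm y}^{\bm\nu}u_{\eps}(\cdot;\bm y),v;\bm y\bigr)
	= -\sum_{j\in\supp(\bm\nu)} \nu_j \int_D \psi_j(x)\,\partial_{\bm y}^{\bm\nu-\bm e_j}u_{\eps}(x;\bm y)\,v(x)\dif x .
\end{equation*}
This identity can be justified either by the difference-quotient argument already used in the proof of \cref{thm:param-reg}, or by induction on $\abs{\bm\nu}$, using \eqref{eq:u-eps-deriv-order-1} as the base case.

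The second step is a one-step estimate. For fixed $\bm y$, the right-hand side above is a functional on $V_{\eps}$. We bound its norm trivially, as in the proof of \cref{lem:coefficient-stability}, using $\norm{v}_{L^2(D)}\le\norm{v}_{1,\eps,D}$ and $\norm{w}_{L^2(D)}\le\norm{w}_{1,\eps,D}$:
\[
	\norm{\cdot}_{-1,\eps,D}\le \sum_{j}\nu_j\norm{\psi_j}_{L^\infty(D)}\norm{\partial_{\bm y}^{\bm\nu-\bm e_j}u_{\eps}(\cdot;\bm y)}_{1,\eps,D}.
\]
Coercivity, with constant $\min\{1,b_0\}$, then yields
\[
	\norm{\partial_{\bm y}^{\bm\nu}u_{\eps}}_{1,\eps,D}\le\sum_{j\in\supp(\bm\nu)}\nu_j b_j\norm{\partial_{\bm y}^{\bm\nu-\bm e_j}u_{\eps}}_{1,\eps,D},
\]
with $b_j$ as in \eqref{eq:b-sequence-partial-deriv}. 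The key point is that no $\eps$ appears anywhere in this step. The term $\eps^2\nabla$ acts only on the left-hand side, and the coupling is a zeroth-order $L^2$ term controlled by the $L^2$ part of the weighted norm.

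The third step is an induction on $\abs{\bm\nu}$. The base case $\bm\nu=\bm 0$ is \eqref{eq:bound-sol}, which gives $B\norm{f}_{-1,\eps,D}$. For the inductive step, insert the hypothesis $\norm{\partial^{\bm\nu-\bm e_j}u_{\eps}}\le B(\abs{\bm\nu}-1)!\,\bm b^{\bm\nu-\bm e_j}\norm{f}_{-1,\eps,D}$ into the recursion. This gives
\[
	B(\abs{\bm\nu}-1)!\,\bm b^{\bm\nu}\norm{f}_{-1,\eps,D}\sum_j\nu_j=B\abs{\bm\nu}!\,\bm b^{\bm\nu}\norm{f}_{-1,\eps,D}.
\]
Taking the supremum over $\bm y\in U$ proves \eqref{eq:bound-partial-deriv}.

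I expect the main point requiring care to be the rigorous justification of the derivative recursion, including at boundary points of $U$ via the affine extension, together with the degenerate case $\eps=0$. For $\eps=0$ the problem is the multiplication equation $b u=f$ in $L^2(D)$, and the same algebra applies with $V_0=L^2(D)$. The quantitative induction itself is routine.
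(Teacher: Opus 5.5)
Your proposal is correct and follows the paper's proof essentially step for step: the same recursive variational identity obtained by induction on $\abs{\bm\nu}$, the same $\eps$-free one-step bound with coupling constants $b_j$, and the same factorial induction. Two small differences. In the one-step bound you go through the dual norm of the right-hand side and the a priori estimate \eqref{eq:bound-sol}, whereas the paper tests directly with $v=\partial_{\bm y}^{\bm\nu}u_{\eps}$; these are the same computation. At $\eps=0$ the paper uses the explicit formula $\partial_{\bm y}^{\bm\nu}u_0=(-1)^{\abs{\bm\nu}}\abs{\bm\nu}!\,f\prod_j\psi_j^{\nu_j}/b^{\abs{\bm\nu}+1}$ for $u_0=f/b$, which also gives existence of the derivatives, since \cref{thm:param-reg} only covers $\eps\in(0,1]$. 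Your reuse of the recursion in $V_0=L^2(D)$ is equally valid, provided you note that the difference-quotient argument goes through verbatim there with coercivity constant $b_0$.
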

\begin{remark}
	For $\eps=0$, the limiting equation is understood as
	$b(x;\bm y)u_0(x;\bm y)=f(x)$, the space $V_0$ is identified with $L^2(D)$,
	and $\norm{\cdot}_{-1,0,D}$ is the $L^2(D)$-norm.
\end{remark}
\begin{proof}
	We first consider the case $\eps \in (0,1]$. We claim that the partial derivatives $\partial_{\bm{y}}^{\bm{\nu}} u_{\eps}(x; \bm{y})$ 
	satisfy variational problems of the following recursive form
	\begin{align}
		\label{eq:partial-deriv-weak-form}
		\begin{split}
			B_{\eps}(\partial_{\bm y}^{\bm\nu}u_{\eps},v;\bm y)
			= -\sum_{j\in\supp(\bm\nu)}\nu_j
			\int_D \psi_j(x)
			\partial_{\bm y}^{\bm\nu-\bm e_j}u_{\eps}(x;\bm y)v(x)\dif x,
			\qquad \forall v\in V_{\eps}.
		\end{split}
	\end{align}
	We show the claim by induction on $\abs{\bm{\nu}}$. For $\abs{\bm{\nu}} = 1$, this is \eqref{eq:u-eps-deriv-order-1}. Let now $\bm{\nu} \in \cF$ with $\abs{\bm{\nu}} > 1$ and let $k \in \supp(\bm{\nu})$. Then, $\tilde{\bm{\nu}} \coloneqq \bm{\nu} - \bm{e}_k \in \cF$ and $\abs{\tilde{\bm{\nu}}} = \abs{\bm{\nu}} - 1 \geq 0$. By the induction hypothesis, we have, for all $v \in V_{\eps}$
	\begin{align*}
		\eps^2 \int_{D} \nabla \partial_{\bm{y}}^{\tilde{\bm{\nu}}} u_{\eps} (x;\bm{y}) \cdot \nabla v(x) \dif x + \int_{D} b(x; \bm{y}) \partial_{\bm{y}}^{\tilde{\bm{\nu}}} u_{\eps}(x; \bm{y}) v(x) \dif x\\
		+ \sum_{j \in \supp(\tilde{\bm{\nu}})} \tilde{\nu}_j \int_{D} \psi_j(x) \partial_{\bm{y}}^{\tilde{\bm{\nu}} - \bm{e}_j} u_{\eps}(x; \bm{y}) v(x) \dif x = 0,
	\end{align*}
	where $\tilde{\nu}_j = \nu_j$ if $j \neq k$ and $\tilde{\nu}_k = \nu_k - 1$.
	Differentiating with respect to $y_k$ yields 
	\begin{align*}
		0 &= \eps^2 \int_{D} \nabla \partial_{\bm{y}}^{\bm{\nu}} u_{\eps} (x;\bm{y}) \cdot \nabla v(x) \dif x + \int_{D} b(x; \bm{y}) \partial_{\bm{y}}^{\bm{\nu}} u_{\eps} (x;\bm{y}) v(x) \dif x\\
		&\qquad + \int_{D} \psi_k(x) \partial_{\bm{y}}^{\tilde{\bm{\nu}}} u_{\eps}(x; \bm{y}) v(x) \dif x  + \sum_{\substack{j \in \supp(\bm{\nu})\\ j \neq k}} \nu_j \int_{D} \psi_j(x) \partial_{\bm{y}}^{\bm{\nu} - \bm{e}_j} u_{\eps}(x; \bm{y}) v(x) \dif x\\
		&\qquad + (\nu_k - 1) \int_{D} \psi_k(x) \partial_{\bm{y}}^{\bm{\nu} - \bm{e}_k} u_{\eps}(x; \bm{y}) v(x) \dif x.
	\end{align*}
	Now, choosing $v(x) = \partial_{\bm{y}}^{\bm{\nu}} u_{\eps}(x; \bm{y}) \in V_{\eps}$, we find
	\begin{align*}
		\min\{1, b_{0}\} \norm{\partial_{\bm{y}}^{\bm{\nu}} u_{\eps}(\cdot; \bm{y})}_{1,\eps,D}^2 &\leq \eps^2 \int_{D} \abs{\nabla \partial_{\bm{y}}^{\bm{\nu}} u_{\eps}(x; \bm{y})}^2 \dif x + \int_{D} b(x; \bm{y}) \abs{\partial_{\bm{y}}^{\bm{\nu}} u_{\eps}(x; \bm{y})}^2 \dif x \\
		&= - \sum_{j \in \supp(\bm{\nu})} \nu_j \int_{D} \psi_j(x) \partial_{\bm{y}}^{\bm{\nu} - \bm{e}_j} u_{\eps}(x; \bm{y}) \partial_{\bm{y}}^{\bm{\nu}} u_{\eps}(x; \bm{y}) \dif x\\
		&\leq \sum_{j \in \supp(\bm{\nu})} \nu_j \norm{\psi_j}_{L^{\infty}(D)}  \norm{\partial_{\bm{y}}^{\bm{\nu} - \bm{e}_j} u_{\eps}(\cdot; \bm{y})}_{L^2(D)} \norm{\partial_{\bm{y}}^{\bm{\nu}} u_{\eps}(\cdot; \bm{y})}_{L^2(D)}\\
		&\leq \sum_{j \in \supp(\bm{\nu})} \nu_j \norm{\psi_j}_{L^{\infty}(D)}  \norm{\partial_{\bm{y}}^{\bm{\nu} - \bm{e}_j} u_{\eps}(\cdot; \bm{y})}_{1,\eps,D} \norm{\partial_{\bm{y}}^{\bm{\nu}} u_{\eps}(\cdot; \bm{y})}_{1,\eps,D} .
	\end{align*}
	This shows that
	\begin{equation}\label{eq:recursion-norm-partial-deriv}
		\norm{\partial_{\bm{y}}^{\bm{\nu}} u_{\eps}(\cdot; \bm{y})}_{1,\eps,D} \leq \sum_{j \in \supp(\bm{\nu})} \nu_j b_j \norm{\partial_{\bm{y}}^{\bm{\nu} - \bm{e}_j} u_{\eps}(\cdot; \bm{y})}_{1,\eps,D},
	\end{equation}
	where $b_j \coloneqq \frac{\norm{\psi_j}_{L^{\infty}(D)}}{\min\{1, b_{0}\}}$, for $j \geq 1$. Denote also $\bm{b} \coloneqq (b_j)_{j\geq 1}$. Finally, we show \eqref{eq:bound-partial-deriv} by induction on $\abs{\bm{\nu}}$. For $\abs{\bm{\nu}} = 0$ this corresponds to $\norm{u_{\eps}(\cdot; \bm{y})}_{1,\eps,D} \leq B \, \norm{f}_{-1,\eps,D}$, with the constant $B$ as in \eqref{eq:bound-sol}. For $\abs{\bm{\nu}} >0$, using \eqref{eq:recursion-norm-partial-deriv} and the induction hypothesis, we find
	\begin{align*}
		\norm{\partial_{\bm{y}}^{\bm{\nu}} u_{\eps}(\cdot; \bm{y})}_{1,\eps,D} &\leq B \, \norm{f}_{-1,\eps,D} \sum_{j \in \supp(\bm{\nu})} \nu_j b_j (\abs{\bm{\nu}} - 1)! \bm{b}^{\bm{\nu}-\bm{e}_j}\\
		 &= B \, \left(\sum_{j \in \supp(\bm{\nu})} \nu_j\right) (\abs{\bm{\nu}} - 1)! \, \bm{b}^{\bm{\nu}} \, \norm{f}_{-1,\eps,D} = B \, \abs{\bm{\nu}}! \, \bm{b}^{\bm{\nu}} \norm{f}_{-1,\eps,D}.
	\end{align*}
	
	We now turn to the case $\eps = 0$. Here, \eqref{eq:de} reduces to
	\begin{equation*}
		b(x; \bm{y}) u_0(x; \bm{y}) = f(x), 
	\end{equation*}
	where $f \in L^2(D)$. Since $b(\cdot; \bm{y}) \ge b_0$ a.e., this equation admits the unique solution
	\begin{equation*}
		u_0(x;\bm{y}) = \frac{f(x)}{b(x; \bm{y})} \in L^2(D).
	\end{equation*}
	Note that this is well-defined since $b(x; \bm{y}) > 0$ for a.e. $x \in D$ and every $\bm{y} \in U$.
	For any $\bm{\nu} \in \cF$, by the Leibniz rule for $u_0(x; \bm{y})$, we obtain
	\begin{equation*}
		\partial_{\bm{y}}^{\bm{\nu}} u_0(x; \bm{y}) =  (-1)^{\abs{\bm{\nu}}} \abs{\bm{\nu}}! f(x) \frac{\prod_{j \in \supp(\bm{\nu})} \psi_j(x)^{\nu_{j}}}{b(x;\bm{y})^{\abs{\bm{\nu}} + 1}}.
	\end{equation*}
	Using $b(x;\bm y)\geq b_0$ and the definition of $b_j$ gives the same bound as
	in \eqref{eq:bound-partial-deriv}.
\end{proof}
\section{Dimension truncation}
\label{sec:dim-trunc}
Our model for the parametric coefficient $b$ involves countably many parameters. Although the solution possesses the required parametric regularity, practical computations require truncating the expansion \eqref{ass:affine-param} to a finite number of terms. In this section, we therefore recall a general result concerning the truncation of smooth functions depending on countably many parameters to some finite dimension $s$.

Dimension truncation is frequently studied in the context of QMC. A setting closely related to ours was considered in \cite{kuo2012QMC}, where a corresponding dimension truncation estimate was derived. For affine-parametric operator equations, an improved convergence rate was later established in \cite{gantner2018dimension} by means of a Neumann series argument. This could likewise be applied to our model with an affine parametric reaction coefficient. However, this approach ultimately relies on the affine structure of the problem. Anticipating more general situations with potentially nonlinear parameter dependence, we instead use a recent result based solely on parametric regularity of the problem and a Taylor expansion argument; see \cite{guth2022application, guth2024lognormal}.

Let $U = [-1/2,1/2]^{\bN}$ and denote by
$\bm{\pi}(\dif{\bm{y}}) \coloneqq \bigotimes_{j\geq 1} \dif y_j$
the uniform product probability measure on $U$.
Let $X$ be a separable Banach space and let $g \in C^{\infty}(U; X)$. For $s \in \bN$, define the dimension-truncated map of dimension $s$ by
\[
	g_s(\bm{y}) \coloneqq g(\bm{y}_{\leq s}, \bm{0})
	\coloneqq g(y_1,\dots,y_s,0,0,\dots).
\]

We will work under the following assumptions:
\begin{enumerate}[label=\textbf{D.\arabic*}]
	\item\label{ass:dim-trunc-1} For $\bm{\pi}$-a.e. $\bm{y} \in U$
	\begin{equation*}
		\norm{g(\bm{y}) - g_s(\bm{y})}_{X} \longrightarrow 0,\quad \text{as } s \to \infty.
	\end{equation*} 
	\item\label{ass:dim-trunc-2} 
	There exist a non-increasing sequence $\bm{b} = (b_j)_{j \geq 1} \in \ell^p(\bN)$, with $b_j \ge 0$ and $0 < p <1$, and constants $\eta, \mu \ge 0$ such that,
	for every nonzero $\bm{\nu} \in \cF$,
	\begin{equation*}
		\norm{\partial_{\bm{y}}^{\bm{\nu}} g(\bm{y})}_{X} \leq \abs{\bm{\nu}}! \mu \eta^{\abs{\bm{\nu}}} \bm{b}^{\bm{\nu}},\qquad \forall \bm{y} \in U.
	\end{equation*}
\end{enumerate}
\begin{proposition}[{\cite[Theorem 4.3]{guth2024lognormal}}]
	\label{prop:dim-truncation-guth}
	Let $g(\bm{y}) \in X$ for all $\bm{y} \in U$. 
        Suppose that \cref{ass:dim-trunc-1,ass:dim-trunc-2} hold. 
        Then
	\begin{equation*}
		\norm*{\int_{U} (g(\bm{y}) - g_{s}(\bm{y})) \bm{\pi}(\dif \bm{y})}_{X} \leq C s^{- \frac{2}{p} + 1},
	\end{equation*} 
	where the constant $C > 0$ is independent of the dimension $s$.
	
	Moreover, for every $G \in X'$,
	\begin{equation*}
		\abs*{\int_{U} G(g(\bm{y}) - g_{s}(\bm{y})) \bm{\pi}(\dif \bm{y})} \leq C \norm{G}_{X'} s^{-\frac{2}{p} + 1},
	\end{equation*}
	where the constant $C >0$ is as above.
\end{proposition}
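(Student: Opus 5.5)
The plan is to follow the Taylor-expansion argument of \cite{guth2022application,guth2024lognormal}. For fixed $\bm y\in U$ and $s\in\bN$, set $\bm y_s\coloneqq(\bm y_{\le s},\bm 0)$ and consider the partial sums $g_S(\bm y)$ with $S>s$. These depend only on the finitely many variables $y_{s+1},\dots,y_S$ beyond the first $s$, so Taylor's theorem in those variables around $\bm 0$ applies:
\[
g_S(\bm y)-g_s(\bm y)=\sum_{1\le|\bm\nu|\le k}\frac{\bm y^{\bm\nu}}{\bm\nu!}\partial^{\bm\nu}g(\bm y_s)+R_{k+1}(\bm y),
\]
where the sum runs over $\bm\nu$ supported in $\{s+1,\dots,S\}$. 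The remainder is written in integral form, with the derivatives of order $k+1$ evaluated at $\bm y_s+t(\bm y-\bm y_s)\in U$.

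The first key step is to integrate against $\bm\pi$. Since the $y_j$ are independent, centered and symmetric, every term whose multi-index has some odd component integrates to zero; in particular all first-order terms vanish, because $g(\bm y_s)$ does not depend on $y_j$ for $j>s$. I would therefore choose $k=1$. The surviving contribution comes only from the second-order remainder.

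The second step bounds the remainder using \ref{ass:dim-trunc-2}. This gives
\[
\|R_2\|_X\le\sum_{|\bm\nu|=2,\ \supp\bm\nu\subset\{s+1,\dots\}}\frac{|\bm y^{\bm\nu}|}{\bm\nu!}\,2!\,\mu\eta^2\bm b^{\bm\nu}\le C\Big(\sum_{j>s}b_j\Big)^2 .
\]
Because $\bm b$ is non-increasing and lies in $\ell^p$, Stechkin's lemma gives $\sum_{j>s}b_j\le C s^{1-1/p}$, and squaring yields $s^{2-2/p}$. This is only $s^{-2/p+2}$, so the crude bound falls short of the claimed rate by a factor $s$.

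To gain that factor, I would keep the exact cancellation one order further. I would take $k=2$ and note that the mixed second-order terms $y_iy_j$ with $i\ne j$ also integrate to zero. The surviving second-order diagonal terms are bounded by $\sum_{j>s}b_j^2\le C s^{1-2/p}$, since $b_j\le C j^{-1/p}$. The third-order remainder is of size $(\sum_{j>s}b_j)^3\sim s^{3-3/p}$, which is at most $s^{1-2/p}$ when $p<1$. Together these give $s^{-2/p+1}$ uniformly in $S$.

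The last step passes $S\to\infty$ using \ref{ass:dim-trunc-1} and dominated convergence, with $g$ bounded by $\mu$ plus the zeroth-order bound; the functional version then follows from $|G(\cdot)|\le\|G\|_{X'}\|\cdot\|_X$. The main obstacle is getting exactly the rate $s^{-2/p+1}$: one must verify carefully that the odd-moment cancellations remove the mixed second-order terms, so that only diagonal second-order terms and a genuinely higher-order remainder survive. If the third-order remainder turns out too large for $p$ close to $1$, I would expand to a higher order $k$ and bound the remainder by $(\sum_{j>s}b_j)^{k+1}$.
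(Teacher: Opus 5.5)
Your strategy is the one behind the cited result. The paper gives no proof of its own and defers to the Taylor-expansion argument of Guth--Kaarnioja: expand in the tail variables around $(\bm y_{\le s},\bm 0)$, use that every monomial with an odd exponent integrates to zero (so all first-order terms drop out), and bound what survives via \ref{ass:dim-trunc-2} and Stechkin's lemma.

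However, the step on which the rate hinges is wrong as written. With $k=2$ the third-order remainder is $\cO\bigl((\sum_{j>s}b_j)^3\bigr)=\cO(s^{3-3/p})$. The inequality $s^{3-3/p}\le s^{1-2/p}$ holds if and only if $p\le 1/2$, not for all $p<1$. For example, $p=4/5$ gives $s^{-3/4}$ against the target $s^{-3/2}$. So your $k=2$ argument proves the proposition only for $p\le 1/2$. That happens to cover the paper's numerical example ($p\approx 8/21$), but not the statement.

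The fix is the fallback you mention, made precise and adopted from the outset rather than as a contingency. The remainder of order $k+1$ is bounded by $C(\sum_{j>s}b_j)^{k+1}\le C s^{-(k+1)(1/p-1)}$. Since $(k+1)(1/p-1)\ge 2/p-1$ if and only if $k\ge 1/(1-p)$, you should fix $k=\lceil 1/(1-p)\rceil$. This order necessarily blows up as $p\to 1$, and the constant depends on $p$ through $k$ and $\eta^{k+1}$, which is harmless since it is independent of $s$.

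With this $k$ you must also control all surviving Taylor terms of orders $4,\dots,k$, not only the diagonal second-order ones. These are the terms with $\bm\nu=2\bm m$, i.e.\ all exponents even. Using $(2\bm m)!\ge \bm m!$ and the multinomial theorem, their total contribution is at most $C_k\sum_{r=1}^{\lfloor k/2\rfloor}\bigl(\sum_{j>s}b_j^2\bigr)^{r}\le C\sum_{j>s}b_j^2\le C s^{1-2/p}$.

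Finally, \ref{ass:dim-trunc-2} gives no bound for $\bm\nu=\bm 0$, so your domination step as $S\to\infty$ should not rely on a bound for $g$ itself. Instead, use the first-order mean-value bound $\norm{g_S(\bm y)-g_s(\bm y)}_X\le \tfrac12\mu\eta\sum_{j>s}b_j$, which is uniform in $S$. Together with \ref{ass:dim-trunc-1}, this justifies passing to the limit.
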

\begin{remark}
	The result in \cite{guth2024lognormal} shows that $C$ depends on
	$g$ only through the derivative-bound constants in \cref{ass:dim-trunc-2}
	and on the sequence $\bm b$. This observation is what yields
	$\eps$-robust dimension-truncation bounds below.
\end{remark}

We now apply \cref{prop:dim-truncation-guth} to
$g(\bm y)=u_{\eps}(\cdot;\bm y)$ with $X=V_{\eps}$ and norm
$\norm{\cdot}_{1,\eps,D}$.
\begin{corollary}\label{cor:dim-truncation}
	Let $f\in L^2(D)$, and suppose that \crefrange{ass:affine-param}{ass:ordering}
	hold with $0<p<1$ in \cref{ass:p-summability}. For any $0 < \eps \leq 1$ and $s \in \bN$, define
	\[
		u_{\eps,s}(\cdot;\bm y)
		\coloneqq u_{\eps}(\cdot;(\bm y_{\leq s},\bm{0})).
	\]
	Then,
	\begin{equation}\label{eq:dim-trunc-eq-1}
		\norm*{\int_U (u_{\eps}(\bm{y})-u_{\eps,s}(\bm{y})) \, \bm{\pi}(\dif{\bm{y}}) }_{1,\eps,D} \leq  C s^{-\frac{2}{p} + 1},
	\end{equation}
	where the constant $C > 0$ is independent of $s$ and $\eps$.
	
	Moreover, for every $G \in (V_{\eps})'$,
	\begin{equation}\label{eq:dim-trunc-eq-2}
		\abs*{\int_U G(u_{\eps}(\bm{y})-u_{\eps, s}(\bm{y})) \, \bm{\pi}(\dif{\bm{y}}) } \leq  C \norm{G}_{(V_{\eps})'} s^{-\frac{2}{p} + 1},
	\end{equation}
	with the same constant as in \eqref{eq:dim-trunc-eq-1}.
\end{corollary}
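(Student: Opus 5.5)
The corollary follows by applying \cref{prop:dim-truncation-guth} with $X=V_\eps$, $\norm{\cdot}_X=\norm{\cdot}_{1,\eps,D}$ and $g(\bm y)=u_\eps(\cdot;\bm y)$. The point to check is that the constant $C$ it produces does not depend on $\eps$. By the remark following \cref{prop:dim-truncation-guth}, $C$ depends on $g$ only through $\mu$, $\eta$ and the sequence $\bm b$ in \cref{ass:dim-trunc-2}. So we must verify \cref{ass:dim-trunc-1,ass:dim-trunc-2} with $\eps$-independent data. For every $\eps$, $V_\eps$ is $H_0^1(D)$ with an equivalent Hilbert norm, so it is separable. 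Smoothness $g\in C^\infty(U;V_\eps)$ comes from \cref{thm:param-reg}.

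\textbf{Step 1: \cref{ass:dim-trunc-2}.} Apply \cref{thm:param-reg-bounds}. Since $f\in L^2(D)$ and the embedding $L^2(D)\hookrightarrow V_\eps'$ has norm at most one, for all $\bm\nu\neq\bm0$ and all $\bm y\in U$,
\[
\norm{\partial^{\bm\nu}_{\bm y}u_\eps(\cdot;\bm y)}_{1,\eps,D}\le B\,\abs{\bm\nu}!\,\bm b^{\bm\nu}\norm{f}_{L^2(D)} .
\]
Take $\mu=B\norm{f}_{L^2(D)}$, $\eta=1$, and $b_j=\norm{\psi_j}_{L^\infty(D)}/\min\{1,b_0\}$. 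By \cref{ass:ordering} this sequence is non-increasing, and by \cref{ass:p-summability} it lies in $\ell^p$ with $0<p<1$. None of $\mu,\eta,\bm b$ involves $\eps$.

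\textbf{Step 2: \cref{ass:dim-trunc-1}.} This holds for every $\bm y\in U$. Apply \cref{lem:coefficient-stability} to the coefficients $b(\cdot;\bm y)$ and $b(\cdot;(\bm y_{\le s},\bm0))$. Both satisfy \cref{ass:bbounds}, because $(\bm y_{\le s},\bm 0)\in U$. The lemma gives
\[
\norm{u_\eps(\bm y)-u_{\eps,s}(\bm y)}_{1,\eps,D}\le \frac{\norm{f}_{L^2(D)}}{\min\{1,b_0\}^2}\sum_{j>s}\tfrac12\norm{\psi_j}_{L^\infty(D)} .
\]
The right-hand side tends to zero as $s\to\infty$ by \cref{ass:summability}.

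\textbf{Step 3: conclusion.} \cref{prop:dim-truncation-guth} now gives \eqref{eq:dim-trunc-eq-1} with a constant depending only on $\mu,\eta,\bm b,p$, hence independent of $s$ and $\eps$. For \eqref{eq:dim-trunc-eq-2}, $G\in V_\eps'$ commutes with the Bochner integral, so
\[
\abs*{\int_U G(u_\eps-u_{\eps,s})\,\bm\pi(\dif\bm y)}\le\norm{G}_{V_\eps'}\norm*{\int_U (u_\eps-u_{\eps,s})\,\bm\pi(\dif\bm y)}_{1,\eps,D}.
\]
Combining this with \eqref{eq:dim-trunc-eq-1} gives the same constant.

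\textbf{Main obstacle.} The real content is confirming that the cited theorem's constant depends on $g$ only through $(\mu,\eta,\bm b)$, and not, for example, on $\sup_{\bm y}\norm{g(\bm y)}_X$ through some other route, or on properties of $X$ itself. If it depended on $\sup_{\bm y}\norm{g(\bm y)}_X$, that is still fine, since it is bounded by $B\norm{f}_{L^2(D)}$ uniformly in $\eps$ via \eqref{eq:bound-sol}. I would check the Taylor-expansion proof in \cite{guth2024lognormal}. There the first-order terms vanish because the $y_j$ have mean zero. The remainder is controlled by sums over $\bm\nu$ of $\abs{\bm\nu}!\,\bm b^{\bm\nu}$ (up to factorial factors), which involve only the derivative bounds and the tail of $\bm b$, so the uniformity in $\eps$ carries over.
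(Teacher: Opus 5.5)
Your proposal is correct and follows the paper's proof. Both verify \cref{ass:dim-trunc-1} by applying \cref{lem:coefficient-stability} to $b(\cdot;\bm y)$ and $b(\cdot;(\bm y_{\le s},\bm 0))$, and verify \cref{ass:dim-trunc-2} by \cref{thm:param-reg-bounds} with $\norm{f}_{-1,\eps,D}\le\norm{f}_{L^2(D)}$, so that $\mu,\eta,\bm b$ are $\eps$-independent; both then invoke \cref{prop:dim-truncation-guth} together with the remark that its constant depends only on these data. The only cosmetic difference is that you derive \eqref{eq:dim-trunc-eq-2} from \eqref{eq:dim-trunc-eq-1} by pulling $G$ through the Bochner integral, whereas the paper cites the second part of the proposition; this amounts to the same estimate with the same constant.
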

\begin{proof}
	For each fixed $\eps\in(0,1]$, \cref{thm:param-reg} gives the required
	smoothness of the map $\bm y\mapsto u_{\eps}(\cdot;\bm y)$ as a $V_{\eps}$-valued
	map. We first verify \cref{ass:dim-trunc-1}. Since
	\[
	\norm{b(\cdot;\bm y)-b(\cdot;\bm y_{\leq s},\bm{0})}_{L^{\infty}(D)}
	\leq \frac12\sum_{j>s}\norm{\psi_j}_{L^{\infty}(D)}\to0,
	\]
	the stability estimate~\eqref{eq:stab} implies
	\[
	\norm{u_{\eps}(\cdot;\bm y)-u_{\eps,s}(\cdot;\bm y)}_{1,\eps,D}\to0
	\qquad\text{for every }\bm y\in U.
	\]
	
	Next, \cref{thm:param-reg-bounds} gives, for every nonzero
	$\bm\nu\in\cF$,
	\[
	\sup_{\bm y\in U}
	\norm{\partial_{\bm y}^{\bm\nu}u_{\eps}(\cdot;\bm y)}_{1,\eps,D}
	\leq B\abs{\bm\nu}!\bm b^{\bm\nu}\norm{f}_{-1,\eps,D},
	\]
	with $B$ and $\bm b$ defined in~\eqref{eq:b-sequence-partial-deriv}. Since
	$f\in L^2(D)$, $\norm{f}_{-1,\eps,D}\leq\norm{f}_{L^2(D)}$, uniformly in
	$\eps$. \Cref{ass:p-summability} and \cref{ass:ordering} imply that
	this sequence $\bm b$ is non-increasing and belongs to $\ell^p(\bN)$. Hence \cref{ass:dim-trunc-2} holds with constants independent of $\eps$.
	
	\Cref{prop:dim-truncation-guth} therefore proves
	\eqref{eq:dim-trunc-eq-1}. Applying the same proposition to the bounded linear
	functional $G\in(V_{\eps})'$ gives~\eqref{eq:dim-trunc-eq-2}.
\end{proof}
\section{Higher-order QMC}
\label{sec:qmc}
This section recalls the higher-order quasi--Monte Carlo (QMC) result used later for the
dimension-truncated parametric problem. We consider the numerical approximation 
of integrals of functions $F$ defined on the $s$-dimensional unit cube $[0,1]^s$
\begin{equation*}
	I_s(F) \coloneqq \int_{[0,1]^s} F(\bm{y}) \dif\bm{y},
\end{equation*}
by QMC quadrature rules 
\begin{equation*}
	Q_{N,s}(F) \coloneqq \frac{1}{N} \sum_{n=0}^{N-1} F(\bm{y}_n),
\end{equation*}
where the quadrature points $\bm{y}_0,\dots,\bm{y}_{N-1} \in [0,1]^s$ are chosen according to a deterministic lattice rule belonging to the family of digital nets. In
the application to~\eqref{eq:integral-functional-sol}, these points are shifted
from $[0,1]^s$ to $[-1/2,1/2]^s$.
Integrals of this form arise, for instance, in the evaluation of linear functionals of the solution to the parametric problem \eqref{eq:de}--\eqref{eq:bc}.
\subsection{Weighted spaces and SPOD weights}
We use the higher-order unanchored Sobolev spaces from~\cite{dick2014higher} in
the case $r=\infty$, which is the only case needed below.\footnote{We follow the definition in \cite{dick2016multilevel}, which corrects a typographical error in the corresponding definition of \cite{dick2014higher}.} For $k\in\bN$, set
$\{1:k\}\coloneqq\{1,\ldots,k\}$. If $\mfu\subseteq\{1:s\}$ and
$\mfv\subseteq\mfu$, then
$(\bm{\alpha}_{\mfv},\bm{\tau}_{\mfu\setminus\mfv},\bm{0})$ denotes the multi-index
$\bm\nu\in\{0,\ldots,\alpha\}^s$ with $\nu_j=\alpha$ for $j\in\mfv$,
$\nu_j=\tau_j$ for $j\in\mfu\setminus\mfv$, and $\nu_j=0$ for $j\notin\mfu$.
\begin{definition}[Higher-order unanchored Sobolev norm]
	Let $\alpha, s \in \bN, 1 \leq q \leq \infty$, and $1 \leq r \leq \infty$, and let $\bgg = (\gamma_{\mfu})_{\mfu \subset \bN}$ be a collection of non-negative real weights.
	Assume further that $F \colon [0,1]^s \to \bR$ has partial derivatives of orders up to $\alpha$ with respect to each variable. Then we define the norm of $F$ by a higher-order unanchored Sobolev norm
	\begin{align*}
		\norm{F}_{s,\alpha,\bgg,q,r}
		&\coloneqq
		\left[
		\sum_{\mfu \subset \{1:s\}}
		\left(
		\frac{1}{\gamma_{\mfu}^{q}}
		\sum_{\mfv \subset \mfu}
		\sum_{\bm{\tau}_{\mfu \setminus \mfv}
			\in \{1:\alpha\}^{\abs{\mfu \setminus \mfv}}}
		\right.
		\right.\\
		&\qquad \left.
		\left.
		\int_{[0,1]^{\abs{\mfv}}}
		\abs*{
			\int_{[0,1]^{s - \abs{\mfv}}}
			\left(
			\partial_{\bm{y}}^{
				(
				\bm{\alpha}_{\mfv},
				\bm{\tau}_{\mfu \setminus \mfv},
				\bm{0}
				)
			} F
			\right)(\bm{y})
			\dif{\bm{y}_{\{1:s\}\setminus \mfv}}
		}^{q}
		\, \dif{\bm{y}_{\mfv}}
		\right)^{r/q}
		\right]^{1/r},
	\end{align*}
	with the obvious modifications if $q$ or $r$ is infinite. Let further $\mathcal{W}_{s,\alpha,\bgg,q,r}$ denote the Banach space of all such functions $F$ with finite norm. In the following we will exclusively work with the case $r = \infty$.
\end{definition}
We shall use this norm for integrands satisfying the derivative bound
\begin{equation}
	\label{eq:ass-QMC-integrand}
	\abs{\partial_{\bm y}^{\bm\nu}F(\bm y)}
	\leq C_1\abs{\bm\nu}!\bm b^{\bm\nu},
	\qquad
	\bm\nu\in\{0,1,\ldots,\alpha\}^s,
	\quad \bm y\in[0,1]^s,
\end{equation}
where $C_1>0$ and $\bm b=(b_j)_{j\geq1}$ is a non-increasing sequence in
$\ell^p(\bN)$ for some $0<p<1$.
\subsubsection*{SPOD weights}
In what follows, we identify one specific choice of weights $\bgg$ such that an integrand
$F$ satisfying bounds on the partial derivatives of the form \eqref{eq:ass-QMC-integrand} belongs to the corresponding
weighted spaces $\mathcal{W}_{s,\alpha,\bgg,q,\infty}$, with $1 \leq q \leq \infty$ arbitrary.
In addition, such a choice of weights should allow for an efficient construction of a lattice rule to approximate the integral $I_{s}(F)$.

One such example of weights, known as \emph{smoothness-driven product and order dependent weights} (SPOD), was first introduced in \cite{dick2014higher}.
For $1 \leq q < \infty$, the case $r=\infty$ and the derivative bound \eqref{eq:ass-QMC-integrand} give
\begin{align*}
	\norm{F}_{s,\alpha, \bgg,q,\infty}
	&\le
	C_1
	\max_{\mfu \subset \{1:s\}}
	\gamma_{\mfu}^{-1}
	\left[
	\sum_{\mfv \subset \mfu}
	\sum_{\bm{\tau}_{\mfu \setminus \mfv}
		\in \{1:\alpha\}^{\abs{\mfu \setminus \mfv}}}
	\left(
	\abs*{
		(
		\bm{\alpha}_{\mfv},
		\bm{\tau}_{\mfu \setminus \mfv},
		\bm{0}
		)
	}!
	\,
	\bm{b}^{
		(
		\bm{\alpha}_{\mfv},
		\bm{\tau}_{\mfu \setminus \mfv},
		\bm{0}
		)
	}
	\right)^q
	\right]^{1/q} \\
	&\leq
	C_1
	\max_{\mfu \subset \{1:s\}}
	\gamma_{\mfu}^{-1}
	\sum_{\mfv \subset \mfu}
	\sum_{\bm{\tau}_{\mfu \setminus \mfv}
		\in \{1:\alpha\}^{\abs{\mfu \setminus \mfv}}}
	\abs*{
		(
		\bm{\alpha}_{\mfv},
		\bm{\tau}_{\mfu \setminus \mfv},
		\bm{0}
		)
	}!
	\,
	\bm{b}^{
		(
		\bm{\alpha}_{\mfv},
		\bm{\tau}_{\mfu \setminus \mfv},
		\bm{0}
		)
	} \\
	&=
	C_1
	\max_{\mfu \subset \{1:s\}}
	\gamma_{\mfu}^{-1}
	\sum_{\bm{\nu}_{\mfu} \in \{1:\alpha\}^{\abs{\mfu}}}
	\abs{\bm{\nu}_{\mfu}}!
	\prod_{j \in \mfu}
	\left(2^{\delta(\nu_j,\alpha)} b_j^{\nu_j}\right).
\end{align*}
For $q=\infty$, the usual modification gives the same final bound.
Thus, to ensure $\norm{F}_{s,\alpha,\bgg,q,\infty} \leq C_1$, we choose the weights
\begin{equation}\label{eq:SPOD-weights}
	\gamma_{\mfu} \coloneqq
	\sum_{\bm{\nu}_{\mfu} \in \{1:\alpha\}^{\abs{\mfu}}}
	\abs{\bm{\nu}_{\mfu}}!
	\prod_{j \in \mfu}
	\left(2^{\delta(\nu_j, \alpha)} b_j^{\nu_j}\right),
\end{equation}
for each $\mfu \subset \{1:s\}$.

A direct consequence of the above is that an integrand $F$ satisfying the bounds \eqref{eq:ass-QMC-integrand} has bounded $\norm{\cdot}_{s,\alpha,\bgg,q,\infty}$-norm for arbitrary $\alpha, s \in \bN$ and $1 \leq q \leq \infty$, and
\begin{equation*}
	\norm{F}_{s,\alpha,\bgg,q,\infty} \leq C_1,
\end{equation*}
where $C_1$ is the constant from \eqref{eq:ass-QMC-integrand}.
\subsection{Interlaced polynomial lattice rules}
In this subsection, we recall results on higher-order QMC quadrature rules based on \emph{interlaced polynomial lattice rules}. These rules were introduced in~\cite{Dick2007Explicit, Dick2008Walsh} and belong to the class of higher-order digital nets; see also~\cite{dick2010digital} for a detailed discussion. Here we use the notation from \cite{dick2014higher}.

Let $\mathfrak{b}$ be prime, let $\bZ_{\mathfrak{b}}$ be the finite field with $\mathfrak{b}$ elements, and let $\bZ_{\mathfrak{b}}[x]$ denote the polynomials over $\bZ_{\mathfrak{b}}$.
\begin{definition}[Polynomial lattice point set]
	For a prime $\mathfrak{b}$ and any $m \in \bN$, let $P \in \bZ_{\mathfrak{b}}[x]$ be an irreducible polynomial with $\deg(P) = m$ (modulus). For a given dimension $s \geq 1$, select $s$ polynomials $q_1(x),\dots,q_s(x)$ from the set
	\begin{equation*}
		\cG_{\mathfrak{b},m} \coloneqq \{q(x) \in \bZ_{\mathfrak{b}}[x] \setminus \{0\} : \deg(q) < m\},
	\end{equation*} 
	and write
	\begin{equation*}
		\bm{q} = \bm{q}(x) = \left(q_1(x),\dots,q_s(x)\right) \in \cG_{\mathfrak{b},m} ^s.
	\end{equation*}
	Then, $\bm{q}$ is called the generating vector. For each integer $0 \leq n < \mathfrak{b}^m$, let $n = \eta_{0} + \eta_{1} \mathfrak{b} + \cdots + \eta_{m-1} \mathfrak{b}^{m-1}$ be the $\mathfrak{b}$-adic expansion of $n$, and we associate with $n$ the polynomial
	\begin{equation*}
		n(x) = \sum_{r=0}^{m-1} \eta_{r} x^{r} \in \bZ_{\mathfrak{b}}[x].
	\end{equation*}
	Furthermore, we denote by $v_m$ the map from $\bZ_{\mathfrak{b}}((x^{-1}))$ to the interval $[0,1)$ defined for any integer $w$ by
	\begin{equation*}
		v_m\left(\sum_{\ell = w}^{\infty} t_{\ell} x^{-\ell}\right) = \sum_{\ell = \max(1,w)}^{m} t_{\ell} \mathfrak{b}^{-\ell}.
	\end{equation*} 
	Then, the QMC point set $\bm{\cS}_{P,\mathfrak{b},m,s}(\bm{q})$ of a polynomial lattice rule comprises the points
	\begin{equation*}
		\bm{y}_n = \left(v_{m}\left(\frac{n(x) q_1(x)}{P(x)}\right),\dots,v_{m}\left(\frac{n(x) q_{s}(x)}{P(x)}\right)\right) \in [0,1)^s, \quad n=0,\dots,\mathfrak{b}^{m} - 1.
	\end{equation*}
\end{definition}
\begin{definition}[Digit interlacing]
	For an interlacing factor $\alpha\in\bN$, define
	$\cD_{\alpha}\colon[0,1)^{\alpha}\to[0,1)$ as follows. If
	$x_r=\sum_{a=1}^{\infty}\xi_{r,a} \mathfrak{b}^{-a}$ for $r=1,\ldots,\alpha$, then
	\[
	\cD_{\alpha}(x_1,\ldots,x_{\alpha})
	\coloneqq
	\sum_{a=1}^{\infty}\sum_{r=1}^{\alpha}
	\xi_{r,a} \mathfrak{b}^{-r-(a-1)\alpha}.
	\]
	For vectors in $[0,1)^{\alpha s}$, apply this map componentwise in consecutive
	blocks of length $\alpha$. An interlaced polynomial lattice rule of order
	$\alpha$ in $s$ dimensions uses the points
	$\cD_{\alpha}(\bm{\cS}_{P,\mathfrak{b},m,\alpha s}(\bm q))$, with
	$\bm q\in\cG_{\mathfrak{b},m}^{\alpha s}$.
\end{definition}
For functions in $\mathcal W_{s,\alpha,\bgg,q,\infty}$, the following CBC
construction and error bound are known.
\begin{theorem}[{\cite[Theorem~3.10]{dick2014higher}}]
	\label{thm:ho-qmc-error}
	Let $\alpha, s \in \bN$ with $\alpha > 1$, $1 \leq q \leq \infty$, and let $\bgg = (\gamma_{\mfu})_{\mfu \subset \bN}$ be a collection of weights. Let $\mathfrak{b}$ be prime and let $m \in \bN$ be arbitrary. Then, an interlaced polynomial lattice rule of order $\alpha$ with $N = \mathfrak{b}^m$ points $\{\bm{y}_0,\dots,\bm{y}_{N-1}\} \subset [0,1]^s$ can be constructed using a CBC algorithm, such that for all $\lambda \in \left(\frac{1}{\alpha}, 1\right]$
	\begin{equation}\label{eq:cbc-bound-estimate}
		\abs*{\frac{1}{\mathfrak{b}^m} \sum_{n=0}^{\mathfrak{b}^m - 1} F(\bm{y}_n) - \int_{[0,1]^s} F(\bm{y}) \dif{\bm{y}}} \leq \left(\frac{2}{\mathfrak{b}^m - 1} \sum_{\emptyset \neq \mfu \subset \{1:s\}} \gamma_{\mfu}^{\lambda} \left[\rho_{\alpha, \mathfrak{b}}(\lambda)\right]^{\abs{\mfu}} \right)^{1/\lambda} \norm{F}_{s,\alpha,\bgg,q,\infty},
	\end{equation}
	where \(\rho_{\alpha,\mathfrak b}(\lambda)\) is the constant from \cite[Theorem~3.10]{dick2014higher}.
\end{theorem}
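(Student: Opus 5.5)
The statement is the CBC error bound for interlaced polynomial lattice rules from \cite[Theorem~3.10]{dick2014higher}. The plan is to follow the standard worst-case-error argument for higher-order digital nets in Walsh space, organised in three steps.

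\emph{Step 1: reduce to a computable figure of merit.} Expand $F\in\mathcal W_{s,\alpha,\bgg,q,\infty}$ in its Walsh series in base $\mathfrak b$. Because the interlaced point set is a digital net, the quadrature error is a sum of Walsh coefficients $\hat F(\bm k)$ over the nonzero dual-net indices $\bm k$. Next, bound the Walsh coefficients using the decay estimate of Dick for functions with $\alpha$ mixed derivatives in each variable:
\[
\abs{\hat F(\bm k)}\le C\,\gamma_{\mfu}\,\mathfrak b^{-\mu_\alpha(\bm k_{\mfu})}\norm{F}_{s,\alpha,\bgg,q,\infty},\qquad \mfu=\supp(\bm k),
\]
where $\mu_\alpha$ is the Dick weight (the sum of the $\alpha$ largest digit positions). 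This bound is obtained from the unanchored Sobolev representation by integrating the derivatives against Walsh functions. Using $r=\infty$, the $\mfu$-contributions are summed with weights $\gamma_{\mfu}$ and the norm enters only through a supremum. The error is then bounded by $\norm{F}_{s,\alpha,\bgg,q,\infty}$ times the figure of merit
\[
B(\bm q)=\sum_{\emptyset\neq\mfu}\gamma_{\mfu}\sum_{\bm k_{\mfu}\in\mathcal D^\perp}\mathfrak b^{-\mu_\alpha(\bm k_{\mfu})}.
\]
The interlacing map $\cD_\alpha$ transfers the dual net of the $\alpha s$-dimensional polynomial lattice rule to the $s$-dimensional dual net. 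Under this transfer, $\mu_\alpha$ of the interlaced index is controlled by $\alpha$ times the sum of the usual $\mu_1$ weights of the $\alpha$ component indices. Consequently $B(\bm q)$ is bounded by a first-order, product-type figure of merit in dimension $\alpha s$.

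\emph{Step 2: component-by-component construction.} Choose $q_1,q_2,\dots$ one at a time, each time minimising the relevant truncated figure of merit $B_d$. The key estimate comes from an averaging argument with Jensen's inequality: for $\lambda\in(1/\alpha,1]$, $(\sum a_i)^\lambda\le\sum a_i^\lambda$. The minimiser is no worse than the average over all $q\in\cG_{\mathfrak b,m}$. Averaging uses the fact that, for irreducible $P$, $n(x)q(x)\equiv 0 \bmod P$ holds for a fraction $1/(\mathfrak b^m-1)$ of the choices of $q$. This yields
\[
B(\bm q)^\lambda\le\frac{2}{\mathfrak b^m-1}\sum_{\emptyset\neq\mfu}\gamma_{\mfu}^\lambda\rho_{\alpha,\mathfrak b}(\lambda)^{\abs{\mfu}},
\]
where $\rho_{\alpha,\mathfrak b}(\lambda)$ collects $\sum_{k\ge1}\mathfrak b^{-\lambda\mu_\alpha(k)}$ together with the interlacing constants. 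This sum is finite exactly when $\lambda\alpha>1$, which is the source of the restriction $\lambda>1/\alpha$. The factor $2$ absorbs the index counting over $\mfu\setminus\{d\}$ versus $\mfu\ni d$ in the induction. Taking the $1/\lambda$ power gives \eqref{eq:cbc-bound-estimate}.

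\emph{Expected obstacle.} The main difficulty is the Walsh-coefficient bound in Step 1 combined with the interlacing inequality relating $\mu_\alpha$ to the component $\mu_1$ weights. Both are technical digit-counting lemmas from \cite{Dick2008Walsh,dick2010digital}, and I would cite rather than reprove them. The CBC induction with SPOD (non-product) weights is the other delicate point. Following \cite{dick2014higher}, I would handle it by keeping the weights inside the sum over $\mfu$ and performing the averaging argument for each fixed $\mfu$ containing the current coordinate.
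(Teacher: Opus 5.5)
The paper gives no proof of this statement beyond the citation to \cite[Theorem~3.10]{dick2014higher}. Your outline follows the architecture of that cited proof: Walsh-coefficient decay, the worst-case error as a figure of merit, Dick's interlacing inequality reducing everything to a first-order figure of merit on the $\alpha s$ polynomial-lattice components, and a CBC averaging argument.

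One step of your CBC argument fails as written, however. When component $d$ is added, the dual-net condition reads $\mathrm{tr}_m(k_d)\,q_d\equiv c \pmod P$, with $c$ determined by $q_1,\dots,q_{d-1}$ and the other indices. It is not a congruence to $0$: for irreducible $P$ and $\mathrm{tr}_m(k_d)\neq 0$, the congruence to $0$ has no solution in $\cG_{\mathfrak b,m}$ at all. The correct fact is that, since $\bZ_{\mathfrak b}[x]/(P)$ is a field, this congruence has at most one solution $q_d\in\cG_{\mathfrak b,m}$ whenever $\mathrm{tr}_m(k_d)\neq 0$, so those terms average to at most $1/(\mathfrak b^m-1)$. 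What your argument misses are the indices with $\mathrm{tr}_m(k_d)=0$, i.e.\ $\mathfrak b^m\mid k_d$. For these the condition does not involve $q_d$, and averaging gains nothing. They must be bounded separately via $\mu_1(\mathfrak b^m k')=m+\mu_1(k')$. This gives a factor $\mathfrak b^{-\alpha\lambda m}<\mathfrak b^{-m}<1/(\mathfrak b^m-1)$, a second place where $\alpha\lambda>1$ is used. Adding this to the $1/(\mathfrak b^m-1)$ from the averaged terms is what produces the factor $2$. Your attribution of the $2$ to the split into subsets containing or not containing $d$ is not correct: the part not containing $d$ is just the previous figure of merit and is covered by the induction hypothesis without loss.

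A smaller point concerns $\rho_{\alpha,\mathfrak b}(\lambda)$. It does not come from $\sum_{k\geq 1}\mathfrak b^{-\lambda\mu_\alpha(k)}$ on interlaced indices, since no averaging can be done there: the interlaced set is not a polynomial lattice rule in $s$ dimensions. It comes from $\sum_{k\geq1}\mathfrak b^{-\alpha\lambda\mu_1(k)}=\frac{\mathfrak b-1}{\mathfrak b^{\alpha\lambda}-\mathfrak b}$ for each polynomial-lattice component. Summing over the nonempty subsets of the $\alpha$ components forming one block gives $\bigl(1+\frac{\mathfrak b-1}{\mathfrak b^{\alpha\lambda}-\mathfrak b}\bigr)^{\alpha}-1$. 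Multiplying by the $\lambda$-th power of the per-coordinate Walsh-coefficient and interlacing constant yields $\rho_{\alpha,\mathfrak b}(\lambda)$; this means the constant in your Walsh bound must be $C^{\abs{\mfu}}$, not $C$. With these corrections your sketch gives the stated bound. Since CBC minimises the $\lambda$-free quantity $B_d$, the bound then holds for all $\lambda\in(1/\alpha,1]$ simultaneously.
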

\begin{remark}[CBC construction]
	\label{rem:cost-cbc}
	The efficient component-by-component (CBC) construction of lattice rules satisfying the bound in \eqref{eq:cbc-bound-estimate} has been studied extensively in the literature. Here we only mention \cite[Section 3.4]{dick2014higher}.
	
	Using the upper bound on the worst case error of the lattice rule as a search criterion allows for the construction of an interlaced polynomial lattice rule of order $\alpha$ with $N = \mathfrak{b}^m$ points with total computational cost
	\[
		\cO\bigl(\alpha sN\log N+\alpha^2s^2N\bigr).
	\]
\end{remark}
Combining \cref{thm:ho-qmc-error} with the SPOD weights yields the
dimension-independent rate used in the sequel. We refer to the discussion after
\cite[Theorem~3.10]{dick2014higher} on SPOD weights for details.
\begin{corollary}
	\label{cor:qmc-error-bound}
	Let $0<p<1$, let $\bm b\in\ell^p(\bN)$ be non-increasing, and set
	$\alpha=\lfloor1/p\rfloor+1$. Let $\bgg$ be the SPOD weights
	\eqref{eq:SPOD-weights}. For every $s\in\bN$, $1\leq q\leq\infty$, prime $\mathfrak{b}$,
	and $N=\mathfrak{b}^m$, an
	interlaced polynomial lattice rule of order $\alpha$ can be constructed such
	that
	\begin{equation}
		\label{eq:cbc-bound-estimate-spod}
		\abs{Q_{N,s}(F)-I_s(F)}
		\leq C(p,\mathfrak{b},\bm b)\norm{F}_{s,\alpha,\bgg,q,\infty}N^{-1/p},
	\end{equation}
	where $C(p,\mathfrak{b},\bm b)>0$ is independent of $s$ and $N$. In particular, if
	$F$ satisfies~\eqref{eq:ass-QMC-integrand}, then
	\[
		\abs{Q_{N,s}(F)-I_s(F)}
		\leq C(p,\mathfrak{b},\bm b)C_1N^{-1/p}.
	\]
\end{corollary}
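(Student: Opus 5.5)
The plan is to combine \cref{thm:ho-qmc-error} with the SPOD weights \eqref{eq:SPOD-weights} and to show that the sum on the right of \eqref{eq:cbc-bound-estimate} is bounded independently of $s$ for a suitable choice of $\lambda$. Take $\alpha=\lfloor1/p\rfloor+1$, so that $\alpha>1/p$, and choose $\lambda\coloneqq p\in(1/\alpha,1)$. With $N=\mathfrak{b}^m$ we have $2/(\mathfrak{b}^m-1)\leq 4/N$ for $m\geq1$, so \eqref{eq:cbc-bound-estimate} becomes
\[
\abs{Q_{N,s}(F)-I_s(F)}\leq 4^{1/p}\Bigl(\sum_{\emptyset\neq\mfu\subset\{1:s\}}\gamma_{\mfu}^{p}\rho_{\alpha,\mathfrak{b}}(p)^{\abs{\mfu}}\Bigr)^{1/p}N^{-1/p}\norm{F}_{s,\alpha,\bgg,q,\infty}.
\]
It therefore remains to show that $S\coloneqq\sum_{\abs{\mfu}<\infty}\gamma_{\mfu}^{p}\rho^{\abs{\mfu}}<\infty$, where $\rho\coloneqq\rho_{\alpha,\mathfrak{b}}(p)$ and the sum runs over all finite subsets of $\bN$. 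Since every finite sum over $\mfu\subset\{1:s\}$ is bounded by $S$, this gives independence of $s$.

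To bound $S$, first use $(\sum a_i)^p\leq\sum a_i^p$ for $p\leq1$:
\[
\gamma_{\mfu}^p\leq\sum_{\bm\nu_{\mfu}\in\{1:\alpha\}^{\abs{\mfu}}}(\abs{\bm\nu_{\mfu}}!)^p\prod_{j\in\mfu}(2b_j^{\nu_j})^p .
\]
Then $S\leq\sum_{\bm\nu\in\cF,\ \nu_j\leq\alpha}(\abs{\bm\nu}!)^p\prod_{j\in\supp\bm\nu}(2^p\rho\, b_j^{p\nu_j})$. This is the standard SPOD summability lemma of \cite{dick2014higher}. The argument writes $\abs{\bm\nu}!\leq \abs{\bm\nu}!/\bm\nu!\cdot(\alpha!)^{\abs{\supp\bm\nu}}$ and bounds the sum over multi-indices with bounded entries by a multinomial-type series
\[
\sum_{k\geq0}(k!)^{p}\Bigl(\sum_{j}\beta_j\Bigr)^{k}/\ldots,
\]
which converges because $\bm b\in\ell^p$.

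The main obstacle is the factorial growth $(\abs{\bm\nu}!)^p$, which has to be compensated by the decay of $b_j$. I would proceed as in \cite[proof of Thm.~3.1]{dick2014higher}. Since $\bm b$ is non-increasing and in $\ell^p$, one has $j\,b_j^p\leq\norm{\bm b}_{\ell^p}^p$, so $b_j\leq Cj^{-1/p}$. Since $\alpha>1/p$, we may pick $\delta$ with $1<\delta$ and $\delta\leq\alpha p$; the exponent $1/p$ is what is used here. Take the finitely many indices $j\leq J$ with $2^p\rho(\alpha!)^p\max_\ell b_j^{p\ell}\cdots\geq1/2$ separately. They contribute a finite factor, since the entries there are bounded by $\alpha$. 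For the tail, use $(\abs{\bm\nu}!)^p\leq \abs{\bm\nu}!^{\,p}$ together with the ordering to show $\sum_{\abs{\bm\nu}=k}(\ldots)\leq (k!)^{p}\bigl(\sum_{j>J}c_j\bigr)^k/k!$ with $c_j\coloneqq C b_j^p$. Here I am allowed to make $\sum_{j>J}c_j$ small. The resulting series $\sum_k (k!)^{p-1}\theta^k$ converges for every $\theta$ since $p<1$.

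Finally, if $F$ satisfies \eqref{eq:ass-QMC-integrand}, the computation preceding \eqref{eq:SPOD-weights} gives $\norm{F}_{s,\alpha,\bgg,q,\infty}\leq C_1$. This yields the second claim with $C(p,\mathfrak{b},\bm b)=4^{1/p}S^{1/p}$.
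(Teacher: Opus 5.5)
Your proposal is correct and follows the paper's route: you take $\lambda=p\in(1/\alpha,1)$ in \cref{thm:ho-qmc-error} and show $s$-independent summability of $\sum_{\mfu}\gamma_{\mfu}^{p}\rho_{\alpha,\mathfrak b}(p)^{\abs{\mfu}}$ for the SPOD weights as in \cite{dick2014higher}. The decisive step is $(\abs{\bm\nu}!)^p\leq(\abs{\bm\nu}!)^{p-1}\frac{\abs{\bm\nu}!}{\bm\nu!}(\alpha!)^{\abs{\supp\bm\nu}}$ followed by the multinomial theorem, which yields $\sum_{k}(k!)^{p-1}\theta^k<\infty$ with $\theta=\max\{1,2^p\rho_{\alpha,\mathfrak b}(p)\,\alpha!\}\sum_j b_j^p$. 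The head/tail splitting, the decay $b_j\lesssim j^{-1/p}$, the auxiliary $\delta$ and the appeal to the ordering in your second paragraph are superfluous, since that series converges for every $\theta$ and the full sum is bounded in one stroke (the splitting as stated would in fact need an extra argument, because $(\abs{\bm\nu}!)^p$ couples head and tail).
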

\begin{remark}
	The constant in~\eqref{eq:cbc-bound-estimate-spod} is independent of the
	dimension $s$. The dependence on the particular integrand enters only through
	the bounds on its partial derivatives. 
\end{remark}
\section{Spatial approximation by Galerkin methods}
\label{sec:spatial-approx}
This section isolates the spatial discretization input needed in the final error
bound. The abstract assumption below is deliberately stated as an approximation
property: the particular finite element space and mesh may depend on the spatial
problem, but the rate must be uniform in the perturbation parameter and in the
parametric coefficient.
\begin{assumption}[Galerkin spatial approximation]
	\label{ass:spatial-approx}
	For each discretization parameter $h>0$, let $V_h \subset V_{\eps}$ be a finite-dimensional subspace. For $\eps \in (0,1]$ and 
	$\bm{y} \in U$, let $u^h_\eps(\cdot;\bm{y}) \in V_h$ be the Galerkin approximation defined by
	\[
	B_\eps(u_{\eps}^{h},v_h; \bm{y}) = \inp{f}{v_h}_{V_{\eps}' \times V_{\eps}},
	\qquad \forall v_h \in V_h.
	\]
	There exists a constant $C_{\text{FE}}>0$, independent of $\eps$, $\bm{y}$, and $h$,
	such that for every $\eps \in (0,1]$ and every $\bm{y} \in U$ the estimate
	\begin{equation}\label{eq:spatial-approx}
		\norm{u_{\eps}(\cdot;\bm{y}) - u^h_\eps(\cdot;\bm{y})}_{L^2(D)} \leq  C_{\text{FE}} \, \kappa(h)
	\end{equation}
	holds, where $\kappa(h)\to 0$ as $h \to 0$ describes the convergence rate
	of the Galerkin approximation in the $\norm{\cdot}_{L^2(D)}$--norm.
\end{assumption}
\begin{remark}[Dependence on domain and coefficients]
	The validity of~\eqref{eq:spatial-approx} depends on structural properties of the
	underlying problem, such as the shape of the domain, the regularity of the right-hand side,
	and the smoothness of the reaction coefficient $b$.
	Standard Galerkin methods on quasi-uniform meshes generally fail to provide $\eps$-uniform error estimates,
	as boundary and corner layers in $u_{\eps}$ are not resolved adequately. Robust estimates are available, for example, for layer-adapted
	$h$-FE methods on Shishkin or Bakhvalov meshes and for $hp$-FE methods under
	analyticity assumptions; see, e.g.,~\cite{RST2nd,SunStynes95,melenk1998hp,melenk2002hp}.
\end{remark}
\subsection{A one-dimensional model problem}
\label{subsec:1d-model-problem}
We now verify \cref{ass:spatial-approx} for a concrete one-dimensional model problem.
Let $D = I \coloneqq [-1,1]$. We assume further that $b(\cdot;\bm{y}) \in H^2(I)$ for all $\bm{y} \in U$, as well as
$f \in H^2(I)$.

The weak formulation is: find $u_{\eps}\in H_0^1(I)$ such that
\begin{equation}
	\label{eq:BVPweak-1d}
	B_{\eps}(u_{\eps},v)=F(v),
	\qquad \forall v\in H_0^1(I),
\end{equation}
where
\[
	B_{\eps}(w,v)
	\coloneqq \int_I \left(\eps^2 w'(x)v'(x)+b(x)w(x)v(x)\right)\dif x,
	\qquad
	F(v)\coloneqq \int_I f(x)v(x)\dif x.
\]
We assume
\begin{equation}
	\label{eq:bbounds-1d}
	0<b_0\leq b(x)\leq b_1<\infty
	\qquad \text{for a.e. }x\in I,
\end{equation}
with constants independent of $\bm y$. This is the one-dimensional version of \cref{ass:bbounds}. 
The corresponding energy norm satisfies
\[
	\norm{v}_{E,I}^2\coloneqq B_{\eps}(v,v)
	\simeq \norm{v}_{1,\eps,I}^2,
\]
with equivalence constants depending only on $b_0$ and $b_1$.

The regularity and asymptotic behavior of solutions to singularly perturbed reaction--diffusion problems of this type
have been studied extensively; see, for example,~\cite{RST2nd} and references therein.

In the present setting, we rely on a robust finite element approximation
result from~\cite{ManuskriptDLBL} for singularly perturbed
reaction--diffusion problems of the form \eqref{eq:BVPweak-1d}, which was formulated under slightly weaker assumptions on the coefficient $b$ and the data $f$ than those typically imposed in the literature.
The key insight for our purposes is that the constants in these estimates
depend on the reaction coefficient $b(x;\bm y)$ only through the global
bounds $b_0$ and $b_1$, and in particular are independent of the
parameter $\bm y \in U$. This allows us to derive FE error estimates that are \emph{uniform} in $\bm y$.
Accordingly, in what follows we suppress the explicit $\bm y$-dependence
of $b$ and consider the model problem with a coefficient
$b \colon I \to \bR$.
\subsection{Robust finite element approximation}
The Galerkin approximations of \eqref{eq:de}--\eqref{eq:bc}
will be based on the weak formulation \eqref{eq:BVPweak-1d}.
To define the discretization of \eqref{eq:BVPweak-1d},
for each $M \in \bN$,
let $\Delta = \{ x_i \}_{i=0}^{M}$ be a
partition of $I$ with $-1 = x_0 < x_1 < \cdots < x_M = 1$
and set
$I_i = (x_{i-1} , x_{i} )$, $h_i = x_{i} - x_{i-1}, i=1, \ldots, M, h_{\max} = \max_{1\leq i \leq M} h_i$.
With $\bP_1(I)$ the space of polynomials of degree at most $1$ on $I$,
we define
\begin{align} \label{eq:Sp}
	\mathcal{S}^1(\Delta) &\coloneqq \{ w \in H^1(I) : w|_{I_j} \in \bP_1(I_j), j=1,\dots,M \},\\
	\mathcal{S}^1_0(\Delta) &\coloneqq \mathcal{S}^1(\Delta) \cap H^1_0(I) \label{eq:Sp0}.
\end{align}
The Galerkin discretization of \eqref{eq:BVPweak-1d} is then given by:
find $u^\Delta_{\eps} \in \mathcal{S}^1_0(\Delta)$
such that
for all $v\in \mathcal{S}^1_0(\Delta)$,
\begin{equation}\label{eq:discrete}
	B_\eps(u^\Delta_{\eps},v)
	=
	\int_{-1}^1 \left\{ \eps^2 \left(u^\Delta_{\eps}\right)' v' + b u^\Delta_{\eps} v \right\} \dif x = \int_{-1}^1 f v \dif x.
\end{equation}
Galerkin orthogonality and coercivity give the best-approximation estimate
\[
	\norm{u_{\eps}-u_{\eps}^{\Delta}}_{E,I}
	\leq C\inf_{v\in\mathcal S_0^1(\Delta)}
	\norm{u_{\eps}-v}_{E,I},
\]
where $C$ depends only on $b_0$ and $b_1$.
\subsection{Shishkin meshes}
Standard finite element methods on uniform meshes do not yield approximations that are robust with respect to the parameter $\eps$.
Solutions of singularly perturbed problems tend to vary rapidly near the boundary, which discrete approximations on uniform meshes cannot resolve adequately.
To overcome this difficulty and obtain $\eps$-uniform approximations, one may employ \emph{layer-adapted meshes}.
Here, we only consider one of the simplest examples of such meshes, the Shishkin mesh~\cite{Shishkin1989}. There exist many other layer-adapted meshes used in the context of singularly perturbed problems such as 
eXp meshes \cite{XenoThesis} and Bakhvalov meshes \cite{Bakhvalov1969}, to name but a few. 
To obtain an $\eps$-uniform approximation, we employ a standard Shishkin
mesh $\Delta_S$ on $I=[-1,1]$ with transition point
\[
	\lambda=\min\{\theta\eps\ln M_{h},1/4\},
	\qquad \theta = \frac{4}{\sqrt{b_0}}.
\]
Assuming that $M_{h}$ is divisible by $4$, the mesh consists of $M_{h}/4$,
$M_{h}/2$, and $M_{h}/4$ uniform sub-intervals on the left layer region, the
coarse interior region, and the right layer region, respectively.
The following result is proved in \cite[Proposition~3.1]{ManuskriptDLBL}.
\begin{proposition}[Approximation on Shishkin mesh]
	\label{prop:shish}
	For $0<\eps \leq 1$ and $b,f \in H^1(I)$,
	let $u_{\eps}$ be the solution to \eqref{eq:de}--\eqref{eq:bc}.
	For $M_{h} \in \mathbb{N}$ divisible by $4$, set $h \coloneqq M_{h}^{-1}$ and let $u_{\eps}^{h}\in \mathcal{S}^1_0(\Delta_S)$ be the
	continuous piecewise linear (CpwL) FE approximation of \eqref{eq:discrete} on the Shishkin mesh $\Delta_S$ with $M_{h}$ intervals
	at length scale $\eps \in (0,1]$.
	There exists a positive constant $C$, independent of $\eps$ and $M_{h}$,
	such that
	for every $0<\eps \leq 1$ and
	for every $4 \leq M_{h} \leq 1/\eps$, it holds
	\begin{equation}\label{eq:ShiskBd}
		\Vert u_{\eps} - u_{\eps}^{h} \Vert_{1,\eps,I} \leq C M_{h}^{-1} \ln M_{h}.
	\end{equation}
	If in addition $b, f \in H^2(I)$, then the bound \eqref{eq:ShiskBd} also holds for $M_{h} > 1/\eps$.
\end{proposition}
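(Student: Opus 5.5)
The bound \eqref{eq:ShiskBd} follows from the quasi-optimality of the Galerkin method in the energy norm, together with an $\eps$-robust decomposition of $u_\eps$ into a smooth part and boundary layers. The decomposition is interpolated piecewise-linearly on $\Delta_S$.

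Since $\norm{\cdot}_{E,I}\simeq\norm{\cdot}_{1,\eps,I}$ with constants depending only on $b_0,b_1$, the best-approximation estimate reduces the task to bounding $\norm{u_\eps - I_h u_\eps}_{1,\eps,I}$, where $I_h$ is the nodal interpolant on $\Delta_S$.

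\textbf{Step 1: solution decomposition.} For $b,f\in H^1(I)$ we write
\[
u_\eps = w + v_- + v_+,
\]
with the following properties.
\begin{itemize}
\item The smooth part satisfies $\norm{w}_{H^1(I)}\le C$, together with $\norm{w''}_{L^2(I)}\le C$ if $b,f\in H^2(I)$ and, in the weaker case, $\norm{w''}_{L^2(I)}\le C\eps^{-1}$ or a similar bound.
\item The layers satisfy $|v_\pm^{(k)}(x)|\le C\eps^{-k}e^{-\sqrt{b_0}\,\mathrm{dist}(x,\pm1)/\eps}$ for $k=0,1,2$.
\end{itemize}
The constants depend only on $b_0,b_1$ and the norms of $b,f$. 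We obtain $w$ from the reduced solution $f/b$, corrected to satisfy the boundary conditions. The layer bounds follow from a comparison or maximum principle with barrier $e^{-\sqrt{b_0}(1\mp x)/\eps}$, which needs only $b\ge b_0$.

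\textbf{Step 2: interpolation estimates, elementwise.}
\begin{itemize}
\item \emph{Smooth part.} On the coarse region $h_i\le 2/M_h$. Standard estimates give $\norm{w-I_hw}_{L^2}\le Ch^2\norm{w''}$ and $\eps\norm{(w-I_hw)'}\le C\eps h\norm{w''}$. In the regime $M_h\le 1/\eps$ we have $\eps h\le h^2 M_h=h$ wait—more precisely $\eps\le h$, so even $\norm{w''}\le C\eps^{-1}$ yields $\eps h\cdot\eps^{-1}=h$ and $h^2\eps^{-1}\le h$. This explains the threshold $M_h\le1/\eps$ under $H^1$ data. For $M_h>1/\eps$ the $H^2$ data give $\norm{w''}\le C$, which removes the restriction.
\item \emph{Layers in the fine region.} There the mesh width is $h_f=4\lambda/M_h\le C\eps\ln M_h/M_h$. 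Hence
\[
\eps\norm{(v-I_hv)'}_{L^2}\le C\eps h_f\norm{v''}_{L^2}\le C\eps h_f\,\eps^{-2}\eps^{1/2}.
\]
Measured in the $\eps$-weighted norm, where $\norm{v''}_{L^2}\sim\eps^{-3/2}$, this gives a bound $\lesssim M_h^{-1}\ln M_h$. The $L^2$ part is similar and smaller.
\item \emph{Layers in the coarse region.} There we bound $v$ and $I_hv$ separately by pointwise decay. The choice $\theta=4/\sqrt{b_0}$ gives $|v|\le CM_h^{-4}$ beyond $\lambda$. The derivative contributes $\eps\norm{v'}\le C\eps^{1/2}M_h^{-4}$, and $\eps\norm{(I_hv)'}\le C\eps M_h\cdot M_h^{-4}$.
\item \emph{Case $\lambda=1/4$.} Then $\eps\ln M_h\gtrsim1$ and the mesh is uniform with $h\sim M_h^{-1}$. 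A direct estimate using $\eps^{-1}\lesssim\ln M_h$ suffices.
\end{itemize}

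\textbf{Main obstacle.} I expect the hardest part to be Step 1 under the weak hypothesis $b,f\in H^1$ only. Classical decompositions assume $C^k$ data. One must track that the smooth-part bound degrades only to $\eps^{-1}$ in $\norm{w''}$, with constants depending on $b$ only through $b_0,b_1$ and $\norm{b}_{H^1}$; this is what makes Step 2 uniform in $\bm y$. I would first try an energy argument: test the equation for $w$ with $-w''$ and use $b'\in L^2$ together with the one-dimensional embedding $H^1\hookrightarrow L^\infty$.
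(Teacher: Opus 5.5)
The paper gives no proof of this proposition; it cites \cite[Proposition~3.1]{ManuskriptDLBL}. Your overall route (energy-norm best approximation, the decomposition $u_\eps = w + v_- + v_+$, elementwise nodal interpolation on $\Delta_S$) is the standard one. Your layer estimates in the fine region, in the coarse region, and for $\lambda = 1/4$ are fine.

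\textbf{The gap.} The smooth-part $L^2$ estimate fails in exactly the regime the $H^1$ statement is about. For $M_h \le 1/\eps$ you have $\eps \le h$, hence $h^2\eps^{-1} \ge h$, not $\le h$. So the bound $\norm{w - I_h w}_{L^2(I)} \le C h^2 \norm{w''}_{L^2(I)} \le C h^2\eps^{-1}$ is $O(h)$ only when $h \le \eps$, i.e.\ in the complementary regime $M_h \ge 1/\eps$. As written, Step 2 does not prove \eqref{eq:ShiskBd} under $H^1$ data, and your explanation of the threshold is backwards.

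\textbf{The repair.} Do not use $w''$ for the $L^2$ part. On each element $K$, the error $w - I_h w$ vanishes at the endpoints, and $(I_h w)'|_K$ is the mean of $w'$ over $K$. Hence $\norm{w - I_h w}_{L^2(K)} \le \frac{h_K}{\pi}\norm{w'}_{L^2(K)}$, and so $\norm{w - I_h w}_{L^2(I)} \le C M_h^{-1}\norm{w'}_{L^2(I)}$. For the weighted derivative part, $\eps\norm{(w - I_h w)'}_{L^2(I)} \le C\eps M_h^{-1}\norm{w''}_{L^2(I)} \le C M_h^{-1}$ for every $\eps$, once $\norm{w''}_{L^2(I)} \le C\eps^{-1}$.

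With this fix your argument gives \eqref{eq:ShiskBd} for all $M_h \ge 4$ under $H^1$ data alone. In your route, neither the restriction $M_h \le 1/\eps$ nor the $H^2$ hypothesis is needed. What $\norm{w''}_{L^2(I)} \le C$ actually buys is the $O(h^2\ln^2(1/h))$ $L^2$-interpolation bound used in \cref{prop:l2-improved-rate}; with $H^1$ data one only gets $\min\{h, h^2/\eps\}$ there.

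\textbf{Step 1.} Your suggested energy argument for the weak case (testing with $-w''$) produces boundary terms, since $w \neq 0$ at $\pm 1$. A cleaner way:
\begin{itemize}
\item Set $u_0 \coloneqq f/b \in H^1(I)$ and $w \coloneqq u_0 + z$, where $z \in H^1_0(I)$ solves $\eps^2(z',\phi') + (bz,\phi) = -\eps^2(u_0',\phi')$ for all $\phi \in H^1_0(I)$.
\item Testing with $\phi = z$ gives $\norm{z'}_{L^2(I)} \le \norm{u_0'}_{L^2(I)}$ and $\norm{z}_{L^2(I)} \le C\eps$.
\item Then $w$ solves $-\eps^2 w'' + bw = f$ with $w(\pm 1) = u_0(\pm 1)$, and $w'' = bz/\eps^2$. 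So $\norm{w''}_{L^2(I)} \le C\eps^{-1}$, with $C$ depending only on $b_0$, $b_1$, $\norm{b}_{H^1(I)}$ and $\norm{f}_{H^1(I)}$.
\item If $b,f \in H^2(I)$, the right-hand side equals $\eps^2(u_0'',\phi)$, and the same test gives $\norm{z}_{L^2(I)} \le C\eps^2$, hence $\norm{w''}_{L^2(I)} \le C$.
\item The remainder $u_\eps - w$ solves the homogeneous equation with boundary values $-u_0(\pm 1)$. Your barrier argument gives the $k=0$ layer bound, $k=2$ follows from the equation, and $k=1$ follows from a mean-value argument on subintervals of length $\eps$. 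This uses only $b_0 \le b \le b_1$.
\end{itemize}
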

\begin{remark}
	\label{rem:kappa-shishkin}
	\Cref{prop:shish} verifies \cref{ass:spatial-approx} for the
	one-dimensional model problem on Shishkin meshes with
	\[
	\kappa(h)=h\ln(1/h), \qquad h=M_h^{-1}.
	\]
	Indeed, $\norm{v}_{L^2(I)}\leq\norm{v}_{1,\eps,I}$, and the constant in
	\eqref{eq:ShiskBd} is independent of $\eps$ and $h$.
\end{remark}
\subsubsection{Improved error bounds in the \texorpdfstring{$L^2$}{L2}-norm}
The $L^2$ interpolation error bound obtained in the proof of \cref{prop:shish} suggests that one might expect a higher order of convergence in the $L^2$-norm of the FE approximation than the rate shown in \cref{prop:shish}.
A standard Aubin--Nitsche argument cannot be applied here, however, because the interpolation estimates rely on a boundary layer decomposition of $u_{\eps}$, and an analogous decomposition is not readily available for the associated dual problem.
Nevertheless, an improved $L^2$-convergence rate was proved in~\cite[Theorem~5.3]{SunStynes95}.
For completeness, we adapt that argument to our setting and provide a proof below.
\begin{proposition}
	\label{prop:l2-improved-rate}
	For $0 < \eps \leq 1$, and $f,b \in H^2(I)$, let $u_{\eps}$ be the solution to \eqref{eq:de}--\eqref{eq:bc}. For $M_{h} \in \mathbb{N}$ divisible by $4$, set $h \coloneqq M_{h}^{-1}$ and let
	$u_{\eps}^{h} \in \mathcal{S}_{0}^1(\Delta_{S})$ be the CpwL FE approximation of \eqref{eq:discrete} on the Shishkin mesh at length scale $\eps \in (0,1]$ with $M_{h}$ intervals. Then for every $0 < \eps \leq 1$ and for every $M_{h} \geq 4$ it holds
	\begin{equation}\label{eq:improved-l2-rate-FE-interpol}
		\norm{\mathcal{I}_{M_{h}} u_{\eps} - u_{\eps}^{h}}_{L^2(I)}  \leq \frac{b_{1}}{b_{0}} \norm{\mathcal{I}_{M_{h}} u_{\eps} - u_{\eps}}_{L^2(I)}.
	\end{equation}
	and
	\begin{equation}\label{eq:improved-l2-rate-FE-sol}
		\norm{u_{\eps} - u_{\eps}^{h}}_{L^2(I)}  \leq C \left( 1 + \frac{b_{1}}{b_{0}} \right) h^2 \ln^2(1/h),
	\end{equation}
	where $C >0$ is independent of $\eps$ and $M_{h}$. Here $\mathcal{I}_{M_{h}}$ denotes the nodal interpolation operator on the Shishkin mesh.
\end{proposition}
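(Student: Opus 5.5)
The estimate \eqref{eq:improved-l2-rate-FE-interpol} is a one-dimensional superconvergence statement, and I would prove it in the spirit of \cite[Theorem~5.3]{SunStynes95}, exploiting that in one dimension the nodal interpolant is the $H^1_0$-projection. The starting point is this observation. Set $\eta\coloneqq u_{\eps}-\mathcal I_{M_h}u_{\eps}$ and $\xi\coloneqq \mathcal I_{M_h}u_{\eps}-u_{\eps}^h\in\mathcal S_0^1(\Delta_S)$. On each interval $I_i$, any $v\in\mathcal S_0^1(\Delta_S)$ has constant derivative, and $\eta$ vanishes at the nodes. Hence
\[
\int_I \eta' v'\dif x=\sum_i v'|_{I_i}\,[\eta]_{x_{i-1}}^{x_i}=0 .
\]
Galerkin orthogonality $B_\eps(u_\eps-u_\eps^h,v)=0$ then gives, for all discrete $v$,
\[
\eps^2\int_I \xi' v'\dif x+\int_I b\,\xi v\dif x=-\int_I b\,\eta\, v\dif x .
\]
So the diffusion part of the interpolation error drops out entirely.

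Next I would take $v=\xi$. The left side is bounded below by $b_0\norm{\xi}_{L^2(I)}^2$, after dropping the nonnegative $\eps^2$-term. The right side is bounded above by $b_1\norm{\eta}_{L^2(I)}\norm{\xi}_{L^2(I)}$. This yields \eqref{eq:improved-l2-rate-FE-interpol} with exactly the constant $b_1/b_0$, independent of $\eps$ and $M_h$.

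For \eqref{eq:improved-l2-rate-FE-sol}, the triangle inequality gives
\[
\norm{u_\eps-u_\eps^h}_{L^2(I)}\le (1+b_1/b_0)\norm{u_\eps-\mathcal I_{M_h}u_\eps}_{L^2(I)},
\]
so it remains to show $\norm{\eta}_{L^2(I)}\le C h^2\ln^2(1/h)$ uniformly in $\eps$. Here I would use the layer decomposition $u_\eps=S+E$ underlying \cite[Proposition~3.1]{ManuskriptDLBL}, which requires $b,f\in H^2(I)$. The smooth part has $\norm{S''}_{L^2}\le C$. The layer part satisfies
\[
|E^{(k)}(x)|\le C\eps^{-k}\bigl(e^{-\sqrt{b_0}(1+x)/\eps}+e^{-\sqrt{b_0}(1-x)/\eps}\bigr),\qquad k\le 2,
\]
possibly only in an $L^2$-weighted sense under $H^2$ data.

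On every cell, the standard estimate $\norm{\eta}_{L^2(I_i)}\le Ch_i^2\norm{u_\eps''}_{L^2(I_i)}$ applied to $S$ gives $Ch^2$, since $h_i\le Ch$. For $E$ I would split into the layer regions and the coarse region.
\begin{itemize}
\item In the layer regions, $h_i\le C\eps\ln M_h/M_h$, and $\norm{E''}_{L^2}\le C\eps^{-3/2}$. This gives $Ch^2\ln^2 M_h\,\eps^{1/2}\le Ch^2\ln^2(1/h)$.
\item In the coarse region, I would bound the interpolant crudely by $\norm{E}_{L^\infty}+\norm{\mathcal I E}_{L^\infty}\le C e^{-\sqrt{b_0}\lambda/\eps}$. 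By the choice $\theta=4/\sqrt{b_0}$ this is at most $CM_h^{-4}$.
\end{itemize}
The case $\lambda=1/4$, i.e.\ $\eps\ln M_h$ large, corresponds to a quasi-uniform mesh. There $\eps^{-1}\le C\ln M_h$, so $h^2\norm{E''}_{L^2}\le Ch^2\ln^2 M_h$ directly.

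The main obstacle is this last interpolation step: making sure the available regularity and decomposition under only $H^2$ data supply the pointwise or weighted bounds on $E$ and $E''$ with constants depending on $b$ only through $b_0,b_1$. If pointwise bounds are unavailable, I would work with localized $L^2$ bounds of $E''$ on the layer and coarse subregions, taken from the proof of \cite[Proposition~3.1]{ManuskriptDLBL}.
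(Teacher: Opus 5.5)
Your argument is the paper's: the term $\eps^2(\eta',\xi')$ vanishes elementwise, $L^2$-coercivity with $b_0$ and the bound $b_1$ give \eqref{eq:improved-l2-rate-FE-interpol}, and the triangle inequality reduces \eqref{eq:improved-l2-rate-FE-sol} to the $L^2$ interpolation error (only your sign convention for $\eta$ differs). The paper does not re-derive that interpolation bound but cites it from the proof of \cite[Proposition~3.1]{ManuskriptDLBL}; your layer/coarse-region sketch is a correct standard substitute, and the obstacle you flag is harmless: if $S$ solves the same equation with $S(\pm1)=f(\pm1)/b(\pm1)$, then $E=u_\eps-S$ solves the homogeneous equation, so a barrier argument and $E''=\eps^{-2}bE$ give the pointwise bounds on $E$ and $E''$ from $b_0\le b\le b_1$ alone.
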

\begin{proof}
	Set
	\[
		\eta \coloneqq \mathcal{I}_{M_{h}} u_{\eps} - u_{\eps},
		\qquad
		\xi \coloneqq \mathcal{I}_{M_{h}} u_{\eps} - u_{\eps}^{h} \in \mathcal{S}_0^1(\Delta_S).
	\]
	Since $b \ge b_0$ a.e. in $I$, Galerkin orthogonality gives
	\begin{align}
		\begin{split}\label{eq:aubin-nitsche-coerc}
			b_0 \norm{\xi}_{L^2(I)}^2
			\leq (b \xi, \xi)_{L^2(I)} \leq B_{\eps}(\xi,\xi) = B_{\eps}(\eta, \xi) + B_{\eps}(u_{\eps} - u_{\eps}^{h}, \xi)
			= B_{\eps}(\eta, \xi).
		\end{split}
	\end{align}
	On each mesh interval, $\xi$ is affine and $\eta$ vanishes at the
	endpoints. Hence elementwise integration by parts yields
	$(\eps^2\eta',\xi')_{L^2(I)}=0$, and therefore
	\[
		B_\eps(\eta,\xi)
		= (b\eta,\xi)_{L^2(I)}
		\le b_1\norm{\eta}_{L^2(I)}\norm{\xi}_{L^2(I)}.
	\]
	Dividing by $b_0\norm{\xi}_{L^2(I)}$ if $\xi\neq0$ proves
	\eqref{eq:improved-l2-rate-FE-interpol}; the case $\xi=0$ is trivial.
	This shows \eqref{eq:improved-l2-rate-FE-interpol}. Finally, we note that
	\begin{equation*}
		\norm{u_{\eps} - u_{\eps}^{h}}_{L^2(I)} \leq \norm{u_{\eps} - \mathcal{I}_{M_{h}} u_{\eps} }_{L^2(I)}  + \norm{\mathcal{I}_{M_{h}} u_{\eps} - u_{\eps}^{h}}_{L^2(I)}  \leq \left(1 + \frac{b_{1}}{b_{0}}\right) \norm{\mathcal{I}_{M_{h}} u_{\eps} - u_{\eps}}_{L^2(I)}.
	\end{equation*}
	The estimate \eqref{eq:improved-l2-rate-FE-sol} then follows from the $L^2$ interpolation error estimate in the proof of \cref{prop:shish}.
\end{proof}
\begin{remark}
	\label{rem:kappa-shishkin-l2}
	In the notation of \cref{ass:spatial-approx}, \cref{prop:l2-improved-rate}
	yields the sharper one-dimensional Shishkin-mesh rate
	\[
	\kappa(h)=h^2\ln^2(1/h), \qquad h=M_h^{-1}.
	\]
	This improved $L^2$ rate is the spatial approximation rate used in the
	one-dimensional fully discrete QMC--FE estimate below.
\end{remark}
\section{Combined error analysis}
\label{sec:combined-error}
We now combine the dimension truncation, QMC quadrature, and spatial
discretization estimates. Throughout this section, the QMC rule is applied on
$[0,1]^s$ and then shifted to the parameter domain $[-1/2,1/2]^s$.
\begin{theorem}
	Let $f\in L^2(D)$, and suppose that
	\crefrange{ass:affine-param}{ass:ordering} hold with $0<p<1$ in
	\cref{ass:p-summability}. Assume also that the spatial discretization
	satisfies \cref{ass:spatial-approx} with discretization parameter $h>0$.
	Let $\alpha=\lfloor1/p\rfloor+1$ and let $N=\mathfrak{b}^m$ with $\mathfrak{b}$ prime.
	
	Then, for any $0<\eps\leq1$, every $s\in\bN$, every $h>0$, and every
	$G\in L^2(D)$, if we approximate the integral
	\eqref{eq:integral-functional-sol} by the QMC method
	\eqref{eq:qmc-rule-sol} using an interlaced polynomial lattice rule of
	order $\alpha$ with $N$ points in $s$ dimensions, and use the Galerkin
	approximation $u_{\eps,s}^{h}\in V_h$ for the truncated spatial problem,
	then
	\begin{equation}\label{eq:combined-bound-general}
		\abs{I(G(u_{\eps})) - Q_{N,s}(G(u_{\eps,s}^{h}))}
		\leq C\left(N^{-1/p}+s^{-2/p+1}+\kappa(h)\right)
		\norm{G}_{L^2(D)},
	\end{equation}
	where $C>0$ is independent of $\eps$, $s$, $N$, and $h$, but may depend on
	fixed data norms and coefficient bounds.
\end{theorem}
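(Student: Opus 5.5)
We split the error by the triangle inequality into three contributions:
\begin{align*}
I(G(u_{\eps})) - Q_{N,s}(G(u_{\eps,s}^{h}))
&= \bigl[I(G(u_{\eps})) - I(G(u_{\eps,s}))\bigr]
+ \bigl[I(G(u_{\eps,s})) - Q_{N,s}(G(u_{\eps,s}))\bigr]\\
&\quad + Q_{N,s}\bigl(G(u_{\eps,s}) - G(u_{\eps,s}^{h})\bigr).
\end{align*}
Here $I$ applied to a function depending only on $\bm y_{\leq s}$ reduces to the $s$-dimensional integral over $[-1/2,1/2]^s$, equivalently over $[0,1]^s$ after the shift. We estimate the three brackets separately and check in each step that the constants depend only on $b_0$, $b_1$, $\bm b$, $p$, $\mathfrak{b}$ and $\norm{f}_{L^2(D)}$.

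\emph{Dimension truncation.} For $G\in L^2(D)$, the map $v\mapsto (G,v)_{L^2(D)}$ is a bounded linear functional on $V_\eps$ with $\norm{G}_{(V_\eps)'}\leq\norm{G}_{L^2(D)}$, since $\norm{v}_{L^2(D)}\leq\norm{v}_{1,\eps,D}$. Then \eqref{eq:dim-trunc-eq-2} of \cref{cor:dim-truncation} bounds the first bracket by $C s^{-2/p+1}\norm{G}_{L^2(D)}$, with $C$ independent of $\eps$ and $s$.

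\emph{QMC error.} We set $F(\bm y)\coloneqq G(u_{\eps,s}(\cdot;\bm y-\tfrac{\bm 1}{\bm 2}))$ for $\bm y\in[0,1]^s$. Derivatives commute with the bounded linear functional $G$; this is legitimate because \cref{thm:param-reg} gives derivatives in $V_\eps$, and difference quotients converge in $V_\eps$. Then \cref{thm:param-reg-bounds} yields
\[
\abs{\partial^{\bm\nu}_{\bm y}F(\bm y)}\leq \norm{G}_{L^2(D)}\, B\norm{f}_{L^2(D)}\,\abs{\bm\nu}!\,\bm b^{\bm\nu},
\]
using $\norm{f}_{-1,\eps,D}\leq\norm{f}_{L^2(D)}$. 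This is \eqref{eq:ass-QMC-integrand} with $C_1=B\norm{f}_{L^2(D)}\norm{G}_{L^2(D)}$ and with $\bm b$ from \eqref{eq:b-sequence-partial-deriv}. By \cref{ass:p-summability,ass:ordering}, this $\bm b$ is non-increasing and lies in $\ell^p$. Truncation only sets $y_j=0$ for $j>s$, so the bounds persist for $u_{\eps,s}$. \Cref{cor:qmc-error-bound} then gives the bound $C N^{-1/p}\norm{G}_{L^2(D)}$.

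The $\eps$-independence here is the point requiring care. The SPOD weights \eqref{eq:SPOD-weights} and hence the CBC generating vector depend only on $\bm b$, so the constant $C(p,\mathfrak{b},\bm b)$ does not involve $\eps$. A single lattice rule therefore serves all $\eps$.

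\emph{Spatial error.} For each QMC node $\bm y_n-\tfrac12$, the truncated coefficient $b(\cdot;(\bm y_{\leq s},\bm 0))$ still satisfies \cref{ass:bbounds} with the same $b_0,b_1$, because $(\bm y_{\leq s},\bm 0)\in U$. Hence \cref{ass:spatial-approx} applies uniformly and gives
\[
\abs{G(u_{\eps,s})-G(u^h_{\eps,s})}\leq \norm{G}_{L^2(D)}\, C_{\mathrm{FE}}\,\kappa(h)
\]
at every node. Averaging over the $N$ nodes preserves this bound.

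Summing the three bounds gives \eqref{eq:combined-bound-general}. The main subtle step is the second one: we must verify that the interchange of $G$ with parametric differentiation is valid and that the resulting derivative constants are free of $\eps$, which rests entirely on using the $\eps$-weighted norm together with $f\in L^2(D)$.
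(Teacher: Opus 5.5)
Your proposal is correct and follows the paper's proof essentially verbatim. It uses the same three-term split: dimension truncation via \cref{cor:dim-truncation} with $\norm{G}_{(V_\eps)'}\leq\norm{G}_{L^2(D)}$; the QMC error via \cref{thm:param-reg-bounds} and \cref{cor:qmc-error-bound} with $\eps$-independent SPOD weights; and the worst-case spatial error from \cref{ass:spatial-approx}, averaged by the equal-weight rule. Your extra remarks, that $(\bm y_{\leq s},\bm 0)\in U$ so both the derivative bounds and the Galerkin estimate transfer to the truncated problem, make explicit what the paper leaves implicit.
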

\begin{proof}
	Let
	\[
	F_s(\bm z)\coloneqq
	G\left(u_{\eps,s}\left(\cdot;\bm z-\frac{\bm{1}}{\bm{2}}\right)\right),
	\qquad \bm z\in[0,1]^s,
	\]
	and define $F_s^h$ analogously with $u_{\eps,s}^h$.
	Following \cite[Section 4.2]{dick2016holomorphic}, we decompose the error as
	\begin{align*}
		\abs{I(G(u_{\eps}))-Q_{N,s}(F_s^h)}
		&\leq
		\abs{I(G(u_{\eps}))-I_s(F_s)}
		+\abs{I_s(F_s)-Q_{N,s}(F_s)} \\
		&\quad +\abs{Q_{N,s}(F_s-F_s^h)}
		\eqqcolon E_I+E_{II}+E_{III}.
	\end{align*}
	For $E_I$, \cref{cor:dim-truncation} gives
	\[
	E_I\leq C\norm{G}_{(V_{\eps})'}s^{-2/p+1}
	\leq C\norm{G}_{L^2(D)}s^{-2/p+1},
	\]
	where the embedding $L^2(D)\hookrightarrow(V_{\eps})'$ has norm at most one.
	For $E_{II}$, the chain rule and \cref{thm:param-reg-bounds} imply, for
	all $\bm\nu\in\{0,1,\ldots,\alpha\}^s$,
	\begin{align*}
		\abs{\partial_{\bm z}^{\bm\nu}F_s(\bm z)}
		&\leq \norm{G}_{(V_{\eps})'}
		\norm{\partial_{\bm y}^{\bm\nu}u_{\eps,s}
			(\cdot;\bm z-\bm{1}/\bm{2})}_{1,\eps,D} \\
		&\leq C\abs{\bm\nu}!\bm b^{\bm\nu}
		\norm{f}_{L^2(D)}\norm{G}_{L^2(D)}.
	\end{align*}
	Thus $F_s$ satisfies~\eqref{eq:ass-QMC-integrand}, and
	\cref{cor:qmc-error-bound} yields
	\[
		E_{II}\leq C N^{-1/p}\norm{G}_{L^2(D)}.
	\]
	Here and below, constants may depend on the fixed data norms, but not on
	$\eps$, $s$, $N$, or $h$.
	For the final term $E_{III}$, we observe that since $Q_{N,s}$ is an equal weight rule, we can bound the FE approximation error at each point $\bm{y}_n$ by 
	the worst-case error and obtain
	\begin{align*}
		E_{III}
		&\leq \frac1N\sum_{n=0}^{N-1}
		\abs*{G\left(u_{\eps,s}(\cdot;\bm y_n-\bm{1}/\bm{2})
			-u_{\eps,s}^h(\cdot;\bm y_n-\bm{1}/\bm{2})\right)} \\
		&\leq \norm{G}_{L^2(D)}
		\sup_{\bm y\in[-1/2,1/2]^s}
		\norm{u_{\eps,s}(\cdot;\bm y)-u_{\eps,s}^h(\cdot;\bm y)}_{L^2(D)} \\
		&\leq C\norm{G}_{L^2(D)}\kappa(h).
	\end{align*}
	Combining the three estimates proves~\eqref{eq:combined-bound-general}.
\end{proof}
For the one-dimensional model problem, the improved $L^2$ spatial estimate from \cref{prop:l2-improved-rate} gives the following concrete rate.
\begin{corollary}[One-dimensional model problem]
	Let $f\in H^2(I)$ and suppose that \crefrange{ass:affine-param}{ass:ordering}
	hold. In addition, assume that $b(\cdot;\bm y)\in H^2(I)$ for every
	$\bm y\in U$. Let $M_h$ denote the number of intervals in the Shishkin mesh and
	set $h=M_h^{-1}$.
	
	Then, for any $G\in L^2(I)$ and every $\eps\in(0,1]$, if we approximate
	\eqref{eq:integral-functional-sol} by \eqref{eq:qmc-rule-sol} using an
	interlaced polynomial lattice rule of order $\alpha=\lfloor1/p\rfloor+1$ with
	$N=\mathfrak{b}^m$ points in $s\geq1$ dimensions, and use a CpwL FE approximation on the Shishkin mesh with $M_h$ intervals, then
	\begin{equation}\label{eq:combined-bound-1d}
		\abs{I(G(u_{\eps})) - Q_{N,s}(G(u_{\eps,s}^{h}))}
		\leq C\left(N^{-1/p}+s^{-2/p+1}+h^2\ln^2(1/h)\right)
		\norm{f}_{L^2(I)}\norm{G}_{L^2(I)}.
	\end{equation}
	Equivalently, the spatial term is bounded by $C M_h^{-2}\ln^2 M_h$. The constant $C>0$ is
	independent of $\eps$, $s$, $N$, and $h$.
\end{corollary}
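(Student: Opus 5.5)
This corollary follows by specializing the general combined-error theorem to $D=I=[-1,1]$ and checking that the spatial rate $\kappa(h)=h^2\ln^2(1/h)$ from \cref{rem:kappa-shishkin-l2} is admissible in \cref{ass:spatial-approx}. The only new point is to expose the factor $\norm{f}_{L^2(I)}$ explicitly.

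I keep the decomposition $E_I+E_{II}+E_{III}$ from the proof of the general theorem.

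\textbf{Term $E_I$.} By \cref{cor:dim-truncation}, the constant depends on $f$ only through the bounds in \cref{ass:dim-trunc-2}. These are $\mu=B\norm{f}_{-1,\eps,I}\le B\norm{f}_{L^2(I)}$ and $\eta=1$, so by linearity the constant scales with $\norm{f}_{L^2(I)}$. Hence
\[
E_I\le C\,s^{-2/p+1}\norm{f}_{L^2(I)}\norm{G}_{L^2(I)}.
\]

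\textbf{Term $E_{II}$.} The derivative bound on $F_s$ already contains $\norm{f}_{L^2(I)}\norm{G}_{L^2(I)}$. Since \cref{cor:qmc-error-bound} is linear in $C_1$, this gives
\[
E_{II}\le C\,N^{-1/p}\norm{f}_{L^2(I)}\norm{G}_{L^2(I)}.
\]

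\textbf{Term $E_{III}$.} I bound it pointwise in $\bm y\in[-1/2,1/2]^s$ by \cref{prop:l2-improved-rate}. This requires the truncated coefficient $b(\cdot;(\bm y_{\le s},\bm 0))$ to lie in $H^2(I)$ and to satisfy the bounds $b_0\le b\le b_1$. The bounds hold because $(\bm y_{\le s},\bm 0)\in U$. The $H^2$ membership is assumed for every point of $U$, so it holds in particular at truncated points.

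The constant in \eqref{eq:improved-l2-rate-FE-sol} must not depend on $\bm y$. Following the remark after \cref{prop:shish}, the constants in \cite{ManuskriptDLBL} depend on $b$ only through $b_0,b_1$ and the relevant Sobolev norms of $b$. I therefore also need $\sup_{\bm y\in U}\norm{b(\cdot;\bm y)}_{H^2(I)}<\infty$. This follows if $\overline b\in H^2(I)$ and $(\norm{\psi_j}_{H^2(I)})_j\in\ell^1$, as indicated in the introduction, and I will read the hypothesis in that uniform sense.

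\textbf{Main obstacle: the factor $\norm{f}_{L^2(I)}$ in $E_{III}$.} The Shishkin estimate is really controlled by $\norm{f}_{H^2(I)}$, not $\norm{f}_{L^2(I)}$, because the interpolation bound uses the layer decomposition of $u_\eps$. Since the problem is linear in $f$, I would normalize: apply the estimate to $f/\norm{f}_{H^2(I)}$. This yields
\[
E_{III}\le C\,h^2\ln^2(1/h)\,\norm{f}_{H^2(I)}\norm{G}_{L^2(I)}.
\]
I would then absorb the ratio $\norm{f}_{H^2(I)}/\norm{f}_{L^2(I)}$ into $C$, which the paper allows ("$C$ may depend on fixed data norms"), since $f$ is fixed and $f\neq0$; the case $f=0$ is trivial.

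Finally, $h^2\ln^2(1/h)=M_h^{-2}\ln^2M_h$ when $h=M_h^{-1}$, which gives the equivalent form of the spatial term. Summing the three estimates gives \eqref{eq:combined-bound-1d}.
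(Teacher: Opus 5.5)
Your proposal is correct and takes the same route as the paper. The paper's proof simply specializes \eqref{eq:combined-bound-general} with $\kappa(h)=h^2\ln^2(1/h)$ from \cref{rem:kappa-shishkin-l2}. Your separate tracking of $\norm{f}_{L^2(I)}$ versus $\norm{f}_{H^2(I)}$, and of the $\bm y$-uniformity of the Shishkin constant, fills in bookkeeping the paper leaves implicit.

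The uniform bound $\sup_{\bm y\in U}\norm{b(\cdot;\bm y)}_{H^2(I)}<\infty$ that you read into the hypothesis is in fact automatic. By \eqref{ass:summability}, $\bm y\mapsto b(\cdot;\bm y)$ is continuous from the compact space $U$ into $L^\infty(I)$. Weak lower semicontinuity of the $H^2$ norm then makes the sets $\{\bm y\in U:\norm{b(\cdot;\bm y)}_{H^2(I)}\le n\}$ closed, and they cover $U$. Baire's theorem together with the affine structure therefore turns the pointwise $H^2$ assumption into a uniform one.
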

\begin{proof}
	The bound follows directly from \eqref{eq:combined-bound-general} by using \cref{rem:kappa-shishkin-l2}.
\end{proof}
The total cost of evaluating $Q_{N,s}(G(u_{\eps,s}^{h}))$ is $\cO(s N M_h)$ operations. Therefore, if we choose $s, N$ and $M_h$ to balance the terms in \eqref{eq:combined-bound-1d} such that the combined error is of order $\cO(\delta)$ for some $\delta > 0$, the resulting QMC--FE approximation has an 
overall cost of order
\begin{equation*}
	\cO(sNM_h)
	=\cO\left(\ln(1/\delta)\,
	\delta^{-\left(p+\frac{p}{2-p}+\frac12\right)}\right).
\end{equation*}
\begin{remark}
	In typical applications, the cost of evaluating $Q_{N,s}(G(u_{\eps,s}^{h}))$ outweighs the cost of the construction of the lattice rule. In the context of the above one-dimensional model problem, this is the case as long as
	\[
		M_h \geq \max \{\alpha^2 s, \alpha \ln(N)\}.
	\]
\end{remark}
\section{Numerical experiments}
\label{sec:numerics}
We present a numerical example that fits into the affine-parametric framework,
and study the convergence of the FE approximation, dimension
truncation and QMC--FE approximation.

All experiments are performed for the following values of the perturbation parameter $\eps$:
\[
	\eps\in\{10^{-3},10^{-5},10^{-7},10^{-9}\}.
\]
In all cases we observe
essentially the same convergence rates. We emphasize that only a single set of QMC
points needs to be generated for all values of $\eps$.

We return to the one-dimensional model problem introduced in
\cref{subsec:1d-model-problem}. For
$\bm y\in U=[-1/2,1/2]^{\bN}$ and $I=[-1,1]$, consider
\eqref{eq:de}--\eqref{eq:bc} with uncertain reaction coefficient
\begin{equation}
	\label{eq:reaction-coeff-experiment}
	b(x;\bm y)=2+\theta\sum_{j=1}^{\infty}y_{j} j^{-\zeta}\sin(\pi jx).
\end{equation}
Here $\theta$ controls the size of the fluctuation coefficients; in the plots
below we use $\theta=0.1$. For this choice, the coefficient is uniformly
positive since
\[
	\abs*{\theta\sum_{j=1}^{\infty}y_{j} j^{-\zeta}\sin(\pi jx)}
	\leq \frac{\theta}{2}\sum_{j=1}^{\infty}j^{-\zeta}<1.
\]
For every finite $s\geq1$, the $s$-term expansion is smooth in the spatial
variable $x$, but not uniformly so in $s$. For the sine basis, a uniform
$H^2(I)$ bound follows from the square-summability condition
$\sum_{j\geq1}j^{4-2\zeta}<\infty$, i.e. from $\zeta>5/2$. This uses the
structured Fourier basis and should be distinguished from absolute summability
in $H^2(I)$. In the following we choose $\zeta=21/8>5/2$.

Writing
$\beta_j \coloneqq \theta \norm{j^{-\zeta}\sin(\pi j x)}_{L^\infty(I)}$,
we have $\beta_j \simeq b_j$ for $\bm{b}$ as in
\eqref{eq:b-sequence-partial-deriv}, up to a multiplicative constant
depending only on the uniform lower bound in
\eqref{eq:reaction-coeff-experiment}.
It holds that
$(\beta_j)_{j\geq1}\in\ell^p(\bN)$ for every $p>1/\zeta$. Thus, for
$\zeta=21/8$, we may take $p=8/21+\delta$ with arbitrarily small
$\delta>0$, and the QMC construction uses
$\alpha=\lfloor1/p\rfloor+1=3$.

As the forcing term we take
\begin{equation*}
	f(x)\coloneqq\frac{(\max(x,0))^2}{2}\in H^2(I),
\end{equation*}
and as quantity of interest in~\eqref{eq:combined-bound-1d}, we consider the
integral of the parametric solution over the physical domain,
\[
	G(u_{\eps}(\bm y))=\int_{I} u_{\eps}(x;\bm y)\dif x.
\]

The QMC points are chosen according to interlaced polynomial lattice rules with
$N=2^m$ points, generated using the \texttt{QMC4PDE} Python
package~\cite{Nuyens2024QMC4PDE}. The construction of the generating matrices is
based on a fast CBC algorithm for SPOD weights from~\cite{kuo2016survey}.
Setting the parameters $a_1=0$, $a_2=1$, $a_3=0$, $\alpha=3$, and $d_1=1$ in
the \texttt{QMC4PDE} package corresponds to the weights in
\eqref{eq:SPOD-weights} and the derivative bounds in
\eqref{eq:ass-QMC-integrand}.

The spatial discretization of~\eqref{eq:de}--\eqref{eq:bc} is performed using
$\bP_1$ finite elements on Shishkin meshes.
\subsection{FE convergence}
The finite element approximation on Shishkin meshes for one-dimensional singularly
perturbed reaction--diffusion problems is classical. Since the forcing term $f$
in our model problem has lower regularity than is commonly assumed in the
literature, we include a brief convergence study.

To verify the spatial discretization error predicted by the theory, we compute
the FE approximation on Shishkin meshes with increasing numbers of intervals.
We denote by $u_{\eps}^{h_\ell}$ the FE solution obtained on a Shishkin mesh
with $M_\ell=2^\ell$ intervals, for $\ell=2,\dots,19$. As reference solution we
use the computation with $M_{\mathrm{ref}}=2^{20}$ intervals.

As reaction coefficient we choose $b(\cdot;\bm y_0)$ in
\eqref{eq:reaction-coeff-experiment}, where
$\bm y_0=(-1/2,-1/2,\ldots)$.

We report three types of errors: the $1,\eps$-error
\[
	\norm{u_{\eps}^{h_{\mathrm{ref}}}(\cdot;\bm y_0)
		-u_{\eps}^{h_\ell}(\cdot;\bm y_0)}_{1,\eps,I},
	\qquad \ell=2,\dots,19,
\]
as well as the $L^2$-error and the balanced-norm error. For
$\eps\in(0,1]$ and $v\in H^1(I)$, the balanced norm is defined as
\[
	\norm{v}_{\eps,b,I}^2
	\coloneqq \eps\norm{v'}_{L^2(I)}^2+\norm{v}_{L^2(I)}^2.
\]
Theoretical results for FE approximations on Shishkin meshes in the balanced
norm are available in the literature; see, e.g.,~\cite{LinssCAX2024,MX2016}.
The results are shown in \cref{fig:FEM-conv}.
\begin{figure}[t]
	\centering
	\includegraphics[width=\textwidth]{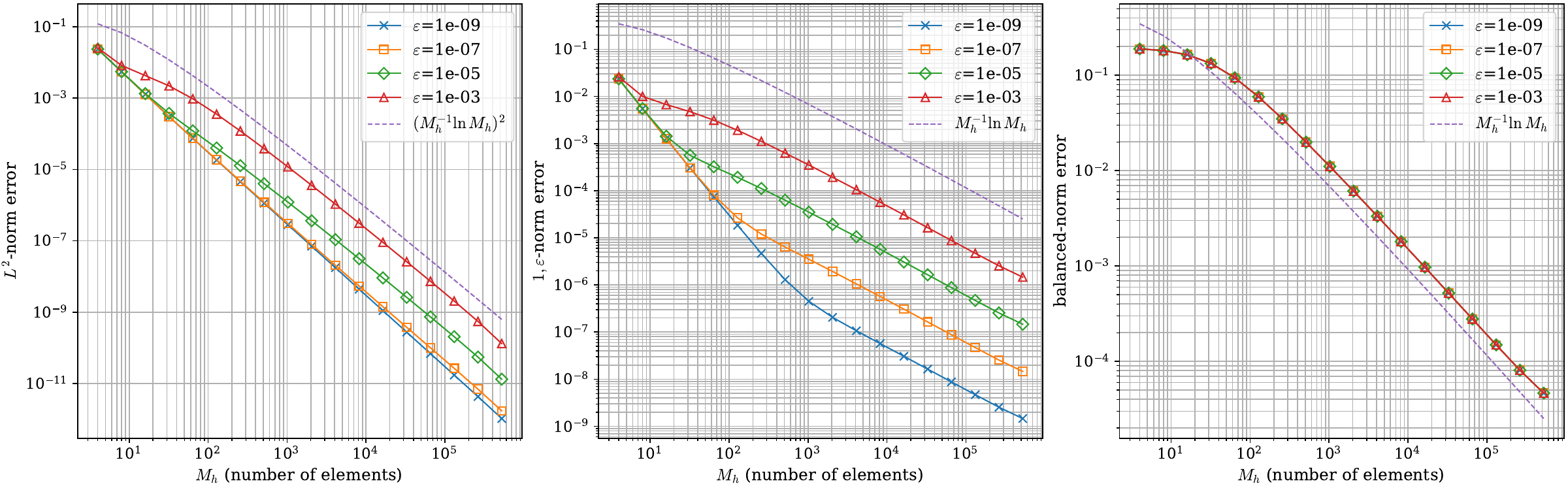}
	\caption{
		Finite element convergence on Shishkin meshes for four values of $\eps$. Left: $L^2$-norm error; middle: $1,\eps$-norm error; right: balanced-norm error. The dashed lines indicate the expected reference convergence rates.}
	\label{fig:FEM-conv}
\end{figure}
\subsection{Dimension truncation convergence}
\label{sec:DimTrncCnv}
In order to numerically verify the theoretical dimension truncation
bound~\eqref{eq:dim-trunc-eq-2}, we compute the QMC--FE approximation for
increasing truncation dimension $s$, while fixing the remaining discretization
parameters. The number of QMC points and the number of finite element intervals
are chosen as $N=4096$ and $M_{h}=2^{20}$, respectively, so that the
quadrature and spatial discretization errors are negligible. A reference value
is computed using $s_{\mathrm{ref}}=256$ dimensions.

We vary the truncation dimension according to $s_\ell=2^\ell$,
$\ell=0,\dots,7$, and compute the approximate integral dimension truncation
error
\[
	\abs{Q_{N,s_{\mathrm{ref}}}(G(u_{\eps,s_{\mathrm{ref}}}^{h}))
		-Q_{N,s_\ell}(G(u_{\eps,s_\ell}^{h}))},
	\qquad \ell=0,\dots,7.
\]
The results are shown in \cref{fig:dim-trunc-conv}.

From the theory in \cref{sec:dim-trunc}, 
we expect an $\eps$-robust convergence rate of
\[
	\cO(s^{-2/p+1})=\cO(s^{-17/4+\delta'}),
\]
for arbitrarily small $\delta'>0$. As shown in
\cref{fig:dim-trunc-conv}, the observed errors decay at this rate after
the first two data points.
\begin{figure}[t]
	\centering
	\begin{subfigure}[t]{0.48\textwidth}
		\centering
		\includegraphics[width=\linewidth]{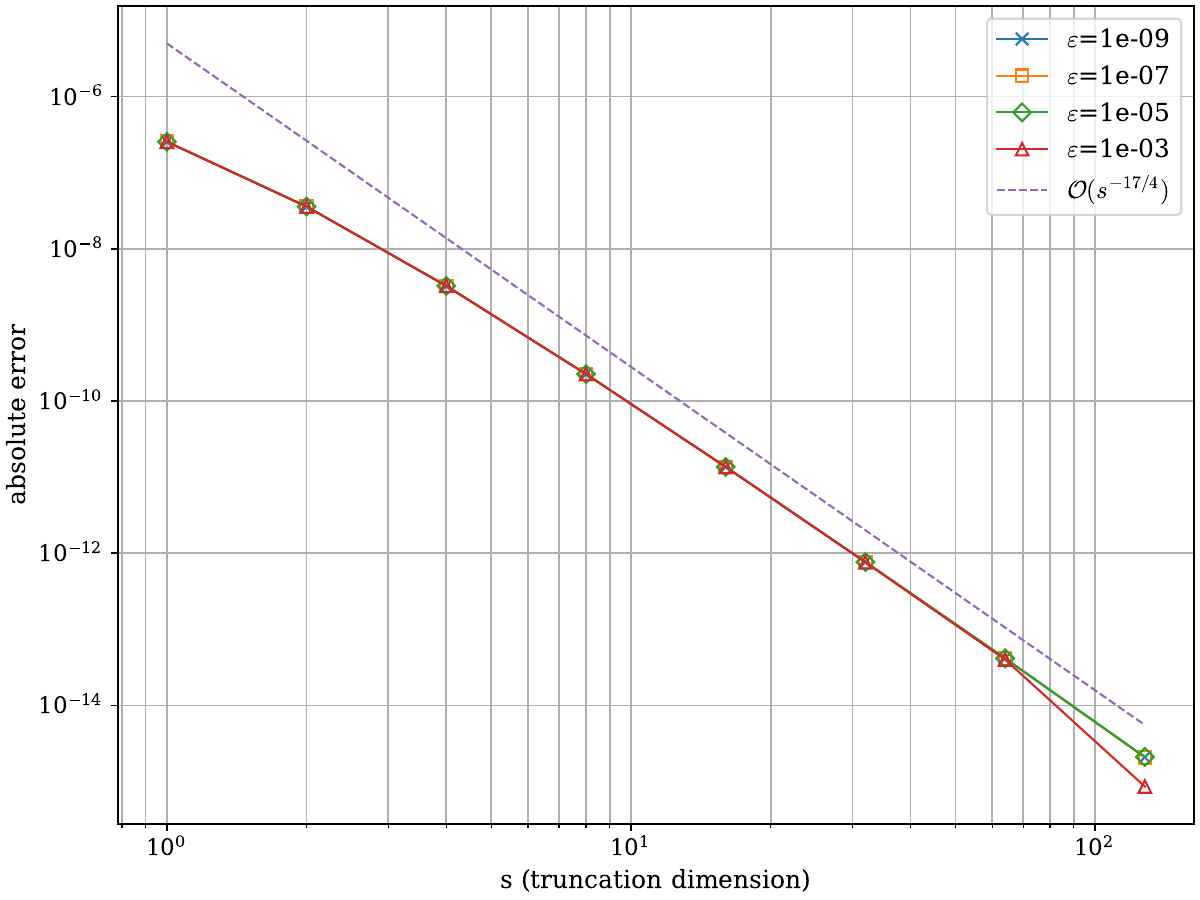}
		\caption{Dimension truncation convergence.}
		\label{fig:dim-trunc-conv}
	\end{subfigure}
	\hfill
	\begin{subfigure}[t]{0.48\textwidth}
		\centering
		\includegraphics[width=\linewidth]{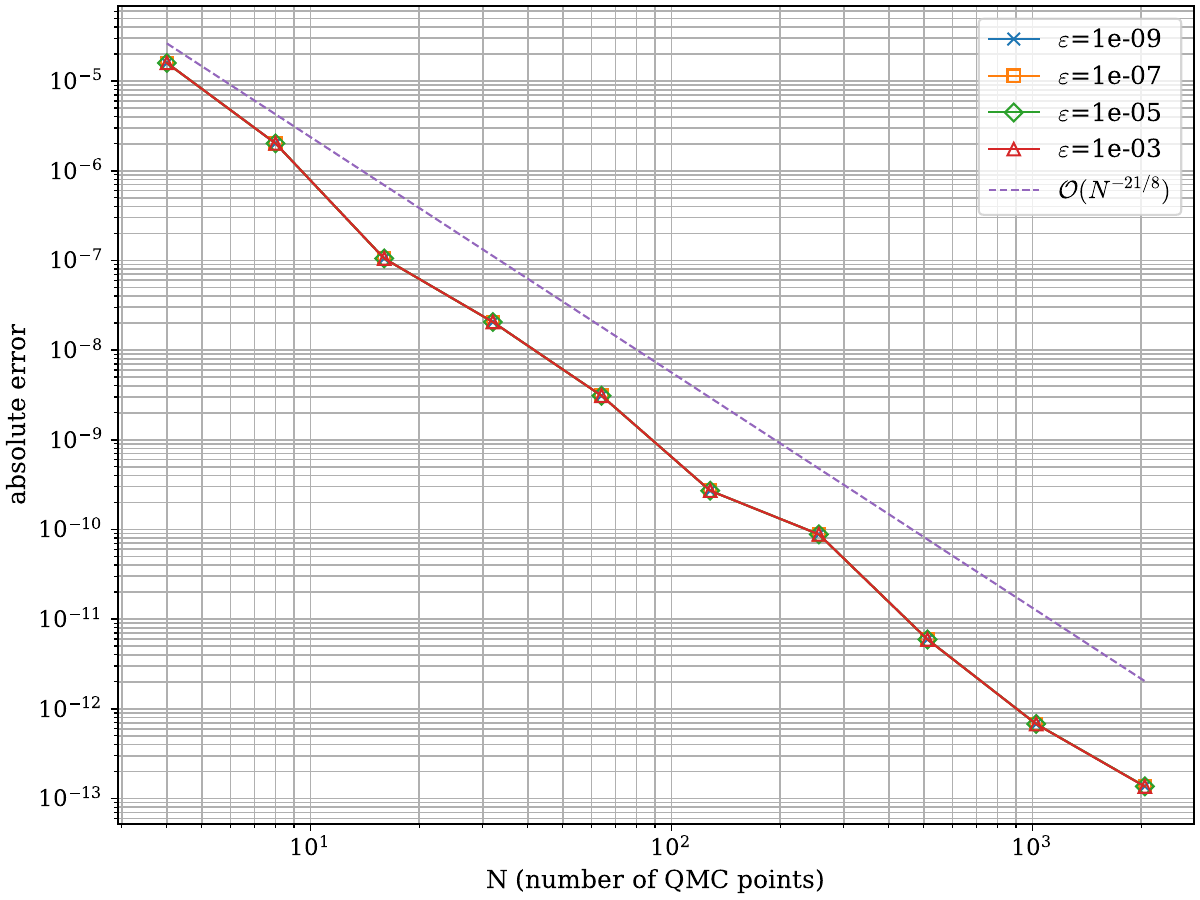}
		\caption{QMC--FE convergence.}
		\label{fig:qmc-conv}
	\end{subfigure}
	\caption{Dimension truncation and QMC--FE convergence for four values of $\eps$ 
		for the reaction coefficient in \eqref{eq:reaction-coeff-experiment},
		with fluctuation amplitudes decaying as $j^{-\zeta}$, where $\zeta=21/8$.
		Left: absolute error due to truncation of the parametric expansion at
		dimension $s$; the dashed line indicates the expected decay rate
		$\mathcal{O}(s^{-17/4})$.
		Right: absolute QMC--FE error obtained with $N$ QMC points; the
		dashed line indicates the expected decay rate
		$\mathcal{O}(N^{-21/8})$.}
	\label{fig:conv-results}
\end{figure}
\subsection{QMC--FE convergence}
\label{sec:QMC-FEc}
Similarly, to study the combined QMC--FE error~\eqref{eq:combined-bound-1d},
we compute the QMC--FE approximation for increasing numbers of QMC points $N$,
while fixing the truncation dimension and spatial discretization. We take
$s=256$ and $M_{h}=2^{20}$ so that the corresponding errors are negligible, and use
the same reference value with $N_{\mathrm{ref}}=2^{12}$ as in the dimension truncation experiment.

We vary the number of QMC points according to $N_\ell=2^\ell$,
$\ell=1,\dots,11$, and compute the approximate QMC--FE integration error
\[
\abs*{Q_{N_{\mathrm{ref}},s}(G(u_{\eps,s}^{h}))
	-Q_{N_\ell,s}(G(u_{\eps,s}^{h}))},
\qquad \ell=1,\dots,11.
\]
The results are shown in \cref{fig:qmc-conv}.

We observe a convergence rate that is higher than predicted by
\eqref{eq:cbc-bound-estimate-spod}. One possible explanation is that the proof
of \cref{cor:qmc-error-bound} requires the choice $\lambda \ge p$ in
order to ensure that the sum on the right-hand side of
\eqref{eq:cbc-bound-estimate} is bounded independently of the dimension $s$.
However, for fixed truncation dimension $s$, one could instead take
$\lambda=1/\alpha+\delta$ for arbitrarily small $\delta>0$, which yields an
analogous bound to~\eqref{eq:cbc-bound-estimate} with rate
$N^{-\alpha+\delta}$, at the expense of a constant that depends on $s$.
\section{Conclusion}
\label{sec:concl}
%
We developed a QMC--FE error analysis of a linear elliptic singularly perturbed
reaction--diffusion equation with an uncertain reaction coefficient. 
Assuming affine-parametric dependence of this coefficient on a parameter sequence
$\bm{y} = (y_j)_{j\geq 1} \in U = [-1/2,1/2]^{\mathbb{N}}$,
we analyzed the convergence rates of QMC integration for the approximation of the
statistical mean with respect to the uniform product probability
measure on $U$. 
We established the parametric regularity of solutions 
with explicit dependence on both the parameter sequence $(y_j)_{j\geq 1}$ 
and the singular perturbation parameter $\eps$.

Under the abstract assumption of the availability of a parameter-robust Galerkin finite element discretization, 
we obtained algebraic convergence rates for the fully discrete approximation method that are uniform with respect to the parameter sequence $(y_j)_{j\geq 1}$ 
characterizing the uncertain reaction coefficient, and robust with respect to the singular perturbation parameter $\eps \in (0,1]$.

The present error analysis was developed for the 
linear scalar model problem \eqref{eq:de}--\eqref{eq:bc}
in order to develop the mathematical framework and to verify
in a direct manner, the $\eps$-independence of the constants in all
parts of the analysis.

In light of previous work on the numerical analysis of UQ for PDEs, 
several extensions of the present analysis suggest themselves.
First, we considered only single-level QMC--FE methods. 
A natural extension of the present work
is the analysis of multilevel QMC--FE discretizations.  
As is the case for nonsingularly perturbed PDEs with uncertain inputs, 
such discretizations offer the prospect of 
improved error versus numerical work for statistical quantities of the parametric
solutions; see, for example, \cite{Kuo2015Multilevel}. 
For singularly perturbed problems such as \eqref{eq:de}--\eqref{eq:bc}, 
however, 
the $\eps$-explicit regularity analysis would need to be refined accordingly in order to
obtain $\eps$-robust MLQMC--FE error bounds.

The present error analysis considered the $\eps$-explicit PDE regularity 
and FE approximation error in $D$ only in an $\eps$-weighted energy norm. 
As is well known (e.g. \cite{LinssCAX2024,MX2016}), for certain applications
the FE error analysis in so-called $\eps$-dependent, \emph{balanced norms} 
is of interest. Error bounds that are uniform with respect to $\bm{y}$ and robust
with respect to $\eps$, analogous to the presently obtained 
results for the energy norm error, also hold in such norms.
Details will be reported elsewhere.

Further generalizations include non-affine parametric 
reaction coefficients, in particular coefficients with Gevrey-regular parametric dependence.
These extensions will also be considered elsewhere.

{\small 
	\printbibliography
}
\end{document}